\documentclass{article}
\usepackage[british]{babel}
\usepackage{geometry}



\usepackage{amsmath}
\usepackage{amssymb}
\usepackage{amsthm}

\newtheorem{theorem}{Theorem}
\newtheorem{lemma}[theorem]{Lemma}

\newtheorem{definition}[theorem]{Definition}
\newtheorem{proposition}[theorem]{Proposition}
\newtheorem{remark}[theorem]{Remark}
\newtheorem{example}[theorem]{Example}

\newtheorem{corollary}[theorem]{Corollary}

\usepackage{stmaryrd}

\usepackage{tikz} 
\usetikzlibrary{decorations.pathreplacing, fit, bbox} 

\usepackage{hyperref}


\DeclareMathOperator{\spn}{span}

\newcommand{\Z}{\mathbb{Z}}
\newcommand{\N}{\mathbb{N}}
\newcommand{\cL}{\mathcal{L}}
\newcommand{\cA}{\mathcal{A}}
\newcommand{\cB}{\mathcal{B}}
\newcommand{\cG}{\mathcal{G}}
\newcommand{\XX}{\mathbb{X}}
\newcommand{\eps}{\varepsilon}
\newcommand{\orbcl}[1]{\overline{\mathcal{O}(#1)}}
\newcommand{\Card}{\#}
\newcommand{\Occ}[3]{\textrm{Occ}_{#1}^{#2}(#3)}
\newcommand{\Par}[2]{\mathrm{Par}_{#1,#2}}
\newcommand{\IM}[2]{\mathrm{Im}_{#1,#2}}

\newtheorem*{theorem*}{Theorem}

\usepackage{authblk}
\title{String attractors and bi-infinite words}
\author[1]{Pierre Béaur}
\author[2]{France Gheeraert\footnote{Corresponding author}}
\author[1]{Benjamin Hellouin de Menibus}

\affil[1]{Laboratoire Interdisciplinaire des Sciences du Numérique, Université Paris-Saclay, CNRS, Orsay, France}
\affil[2]{LAMFA, Université de Picardie Jules Verne, Amiens, France}
            
\date{}

\begin{document}

\maketitle

\begin{abstract}
String attractors are a combinatorial tool coming from the field of data compression. It is a set of positions within a word which captures an occurrence of every factor. While one-sided infinite words admitting a finite string attractor are eventually periodic, the situation is different for two-sided infinite words. In this article, we characterise the bi-infinite words admitting a finite string attractor as the characteristic Sturmian words and their morphic images.
For words that do not admit finite string attractors, we study the structure and properties of their infinite string attractors.
\end{abstract}

\section{Introduction}

A string attractor of a finite word (or sequence of symbols) is a set of positions which captures all the factors of this word, in the sense that every factor has an occurrence crossing one of the positions. Such a set can be related to data compression techniques such as the Lempel-Ziv factorisation or the Burrows-Wheeler transform~\cite{prezza_2018}. As such, string attractors (and in particular string attractors of minimal size) provide a common ground between various compression methods and may open the door to new techniques.

Originating in the field of data compression algorithms, string attractors have quickly gained much traction in the combinatorics on words community as a simple enough object having a far from trivial behaviour when faced with operations such as concatenation.
String attractors (for bi-infinite words) also appeared independently (and not explicitly named) as a technical tool in symbolic dynamics, in the study of so-called indistinguishable asymptotic pairs \cite{Barbieri_Labbe_Starosta, barbieri2025indistinguishable}.

Finding a string attractor of minimal size is known to be an NP-complete problem in general~\cite{prezza_2018}. Therefore the search quickly turned to particular finite words, such as the prefixes of the Thue-Morse word~\cite{schaeffer_2021, Kutsukake_et_al} or standard Sturmian words~\cite{mantaci_2021}. Using combinatorial techniques, string attractors have also been used to characterise Sturmian and quasi-Sturmian (one-sided) infinite words~\cite{sa_and_infinite_words,sa_and_infinite_words_extended_version}, two famous families of words which can be seen as the simplest interesting infinite words. These works consider string attractors for prefixes of the infinite word, and not for the infinite word itself. The reason for this is simple: the only one-sided infinite words admitting a finite string attractor are eventually periodic~\cite{sa_and_infinite_words}, so this notion has limited interest. Indeed, a fixed position can capture a bounded amount of factors of each length, so a finite string attractor implies that the factor complexity is bounded.

However, the situation is different when considering two-sided infinite words, as the number of length-$n$ factors captured by a fixed position is not necessarily bounded. This is a motivation to study bi-infinite words admitting a finite string attractor, which is the main goal of this article.
We obtain the following characterisation of all bi-infinite words admitting a finite string attractor:

\begin{theorem*}[Main result -- Theorem~\ref{T:finite string attractors}]
A bi-infinite word $x$ has a finite string attractor if and only if it falls into one of the following (disjoint) cases:
\begin{itemize}
    \item it is purely periodic;
    \item it is eventually periodic but not purely periodic;
    \item it is, up to finite shift, characteristic quasi-Sturmian (the image of a characteristic Sturmian word by an aperiodic substitution).
\end{itemize}

In the last two cases, $x$ admits an interval of length $k+1$ as a string attractor, where $k$ is such that the factor complexity of $x$ is eventually equal to $n + k$.
\end{theorem*}
Let us make a few observations on this result. First, the latter string attractor is as small as possible, as the factor complexity of a word that admits an interval of length $\ell+1$ as a string attractor is upper bounded by $n+\ell$. Second, string attractors made of consecutive positions are also used in the characterisation of (one-sided) Sturmian and quasi-Sturmian words~\cite{sa_and_infinite_words,sa_and_infinite_words_extended_version}. Third, while Sturmian and quasi-Sturmian words have many equivalent characterisations, combinatorial characterisations of characteristic quasi-Sturmian words do not seem as common, and we discuss some possible links with other characterisations (notably indistinguishable asymptotic pairs \cite{Barbieri_Labbe_Starosta}) in Section~\ref{sec:pattern-attractors}. Finally, this result is reminiscent of the decomposition of balanced sequences in four families~\cite{Morse_Hedlund}. Indeed, there is a similar decomposition for substitutive images of balanced sequences into four categories: purely periodic, characteristic quasi-Sturmian, non-characteristic quasi-Sturmian, and skew quasi-Sturmian (which are eventually periodic non-purely periodic words). Thus, three of these families have a finite string attractor. This is not a characterization however as some eventually periodic sequences are not the image of a balanced sequence (see Example~\ref{example:unbalanced}).

\medskip

This article is organised as follows. First, we recall usual notions of combinatorics on words and introduce string attractors in Section~\ref{S:preliminaries}. We improve existing results connecting string attractors and two classical notions, periodicity and substitutions, in Section~\ref{S:classical tools}.

Section~\ref{S:finite string attractors} is devoted to the study of finite string attractors and to the proof of the main result.
For this we study string attractors on three families of words: eventually periodic, Sturmian, and quasi-Sturmian, leading to our main result. We conclude the section by studying finite string attractors in the context of shift spaces and attractors that apply to patterns instead of factors.

Next, we consider in Section~\ref{S:infinite string attractors} words that do not have any finite string attractors, and attempt to define a notion of ``smallest'' infinite string attractor. However, we show in Propositions~\ref{P:Density of infi SA} and~\ref{P:sparse string attractor for shift spaces} that any recurrent word or minimal shift space admits arbitrarily sparse string attractors, implying that such a notion does not exist. We therefore turn our attention to the study of bi-infinite words and shift spaces having particular infinite string attractors (every arithmetic progression) and relate this to other dynamical properties.

\section{Preliminaries}\label{S:preliminaries}

\subsection{Words, shift spaces and substitutions}

An \emph{alphabet} $\cA$ is a finite set of symbols, called \emph{letters}. The set of \emph{finite words}, i.e., of finite sequences, over $\cA$ is denoted $\cA^*$ and it is naturally endowed with the concatenation. The \emph{length} of a finite word $w$, denoted $|w|$, is the integer $\ell$ such that $w \in \cA^\ell$; the empty word $\eps$ is the only word of length $0$. Finite words are indexed from $0$ to $|w|-1$. The \emph{one-sided infinite words} over $\cA$ are the words of $\cA^\N$, and the \emph{bi-infinite words} over $\cA$ are the words of $\cA^\Z$. For $(x_i)_{i \in \Z} \in \cA^\Z$, we sometimes write $x = \cdots x_{-3} x_{-2} x_{-1}.x_0 x_1 x_2 \cdots$. We extend concatenation to infinite words with the convention that $xw = x$ for $x \in \cA^\N \cup \cA^\Z$ and $w \in \cA^\ast$.

A finite word $u$ is a \emph{factor} of a finite or infinite word $w$ if there exist words $p,s$ such that $w = pus$. If $p = \eps$ (resp., $s = \eps$), we moreover say that $u$ is a \emph{prefix} (resp., \emph{suffix}) of $w$. If $u\neq w$, we also say that $u$ is \emph{proper}. The set of length-$n$ factors of $w$ is denoted $\cL_n(w)$ and the set of all factors of $w$ is denoted $\cL(w)$. The \emph{factor complexity} of a word $x$ is the function $p_x$ such that $p_x(n) = \Card{\cL_n(x)}$ for all $n$. 
A factor $u$ of $w$ is \emph{left-special} (resp., \emph{right-special}) if there exist (at least) two different letters $a$ and $b$ such that $au, bu \in \cL(x)$ (resp., $ua, ub \in \cL(x)$). It is \emph{bispecial} if it is both left-special and right-special.

We use the notation $\llbracket j,k\rrbracket$ for $\{j, j+1,\dots, k\}$ and $x_{\llbracket j,k \rrbracket}$ for $x_jx_{j+1}\cdots x_k$.

\begin{definition}[Occurrence, crossing] 
For a word $x$, the factor $x_{\llbracket j,k\rrbracket}$ has an \emph{occurrence} in $x$ starting in $j$, ending in $k$ and \emph{crossing} every position in $\llbracket j,k\rrbracket$.
\end{definition}

A finite word $x$ is \emph{periodic of period $p \geq 1$} (or \emph{$p$-periodic} for short) if $x_n = x_{n+p}$ for all $n \in \llbracket 0, |x|-p-1\rrbracket$. We then also say that it is a \emph{fractional power} of $x_{\llbracket 0,p-1\rrbracket}$.
A bi-infinite word $x$ is:
\begin{itemize}
    \item \emph{(purely) periodic} if there exists a period $p \geq 1$ such that, for all $n \in \Z$, $x_n = x_{n+p}$;
    \item \emph{eventually periodic} if there are periods $p,q \geq 1$ and indices $N_p, N_q\in \Z$ such that $x_n = x_{n+p}$ for all $n\geq N_p$ and $x_n = x_{n-q}$ for all $n\leq N_q$; 
    \item \emph{aperiodic} if for all $p \geq 1$ and $N\in \Z$ there exist $n\leq N$ and $n'\geq N$ such that $x_n \neq x_{n+p}$ and $x_{n'} \neq x_{n'+p}$.
\end{itemize} 
A bi-infinite word $x$ is \emph{recurrent} if every factor has infinitely many occurrences, and it is \emph{uniformly recurrent} if every factor occurs with bounded gaps, i.e., for each $u \in \cL(x)$, there exists $n$ such that $u$ is a factor of every $v \in \cL_n(x)$.

\begin{remark}\label{R:uniformly recurrent and aperiodic imply periodic}
If $x \in \cA^\Z$ is uniformly recurrent and not aperiodic, then it is purely periodic.
\end{remark}

\medskip

A \emph{shift space} is a subset of $\cA^\Z$, closed for the product topology (when $\cA$ is endowed with the discrete topology) and stable under the \emph{shift map} $S\colon (x_i)_{i \in \Z} \mapsto (x_{i+1})_{i \in \Z}$. 
The \emph{orbit} of a word $x \in \cA^\Z$ is the set $\{S^k(x) \mid k \in \Z\}$. Its topological closure is denoted $\orbcl{x}$ and is a shift space.

A shift space is \emph{aperiodic} if it does not contain any purely periodic words. It is \emph{minimal} if the only shift spaces it contains are itself and $\emptyset$. Equivalently, a shift space is minimal if and only if it is the orbit closure of each of its words. All words of a minimal shift space are uniformly recurrent and share the same language.

\medskip

A \emph{substitution} is a monoid morphism $\varphi$ from $\cA^*$ to $\cB^*$ such that $\varphi(u) \neq \eps$ for any $u \in \cA^* \setminus \{\eps\}$ (this is sometimes called a \emph{non-erasing morphism}). It is entirely determined by the images of the elements of $\cA$ since $\varphi(w)=\varphi(w_0)\cdots\varphi(w_{|w|-1})$. We naturally extend substitutions to infinite words by concatenation. 
We also define the image of a shift space as follows: if $X \subseteq \cA^\Z$ is a shift space, then $\varphi(X) = \{S^k(\varphi(x)) \mid x \in X, 0 \leq k < |\varphi(x_0)|\}$.

\subsection{String attractors}

\begin{definition}[String attractor]
    Let $x$ be a word and let $\Gamma \subseteq \Z$ be a set of positions within $x$. A factor $w$ of $x$ is \emph{captured by $\Gamma$} if it has an occurrence crossing $\Gamma$, i.e., there exists $i$ such that $x_{\llbracket i, i+ |w|-1 \rrbracket} = w$ and $\llbracket i, i + |w| -1 \rrbracket \cap \Gamma \neq \emptyset$.
    If every non-empty factor of $x$ is captured by $\Gamma$, we say that $\Gamma$ is a \emph{string attractor} of $x$.
\end{definition}

\begin{example}\label{example:unbalanced}
Consider the word $x = \cdots 000\underline{0.1}111 \cdots$, i.e., $x_i = 0$ if $i < 0$ and $x_i = 1$ otherwise.
Its language is $\mathcal{L}(x) = \{0^i1^j \mid i,j \geq 0 \}$.
Consider $\Gamma = \llbracket -1,0 \rrbracket$ (underlined) and let $0^i1^j$ be a nonempty factor of $x$. We have $x_{\llbracket-i,j-1\rrbracket} = 0^i1^j$, and since $0^i1^j$ is nonempty, $\llbracket -i, j-1 \rrbracket \cap \llbracket -1, 0 \rrbracket \neq \emptyset$.
Thus $0^i1^j$ is captured by $\Gamma$. As this is true for every nonempty factor of $x$, $\Gamma$ is a string attractor of $x$.
\end{example}

In this work, we encounter the following type of string attractors:

\begin{definition}[Perfect, eventually perfect attractors]
A string attractor $\Gamma$ of $x$ is \emph{perfect} is every factor of $x$ has exactly one occurrence crossing $\Gamma$. It is \emph{eventually perfect} if this holds for all long enough factors.
\end{definition}

We also consider string attractors of shift spaces:

\begin{definition}
    Let $\mathbb{X}$ be a shift space.
    A set $\Gamma \subseteq \Z$ is a \emph{string attractor of} $\mathbb{X}$ if $\Gamma$ is a string attractor of every $x \in \mathbb{X}$.
\end{definition}

\section{String attractors toolbox}\label{S:classical tools}

\subsection{Periodicity}

Periodicity is known to help with the study of string attractors. In particular, a version of the following simple result already appears in~\cite{mantaci_2021}.

\begin{lemma}\label{L:one period}
    Let $w$ be a finite $p$-periodic word. If $\Gamma$ is a string attractor of $w_{\llbracket 0, p-1\rrbracket}$, then $\Gamma \cup \{p-1\}$ is a string attractor of $w$.
    In particular, $\llbracket 0, p-1\rrbracket$ is a string attractor of $w$.
    By symmetry, $\llbracket |w|-p, |w|-1\rrbracket$ is also a string attractor of $w$.
\end{lemma}

When the word admits two (unrelated) periods, we can combine the string attractors of Lemma~\ref{L:one period} to obtain a smaller string attractor.

\begin{lemma}\label{L:string attractor of finite periodic word - new}
    If $w$ is a finite word that is both $p$-periodic and $q$-periodic but not $\gcd(p,q)$-periodic, then $\Gamma = \llbracket |w|-q, p-1\rrbracket$ is a string attractor of $w$.
\end{lemma}
\begin{proof}
    Observe that, by non-periodicity for $\gcd(p,q)$ and by Fine and Wilf's theorem, $p$ and $q$ are not multiple of one another and we have $|w| < p + q - \gcd(p,q)$. In particular, $p > |w| - q$ so $\llbracket |w|-q, p-1\rrbracket$ is a valid non-empty interval.

    We proceed by induction on $|w|-|f|$ to prove that, for all such $w$ and for all non-empty factor $f$ of $w$, $f$ is captured by $\Gamma$. For the base case, if $|f| = |w|$ then $f = w$ which has an occurrence crossing $\Gamma$. Let us prove the inductive case for some $w$ and some factor $f$ of $w$. We assume that $p < q$, the other case is symmetric.
    
    Write $w = uw'v$ where $u$ is the length-$(|w|-q)$ prefix, $v$ is the length-$(|w|-p)$ suffix. Note that this occurrence of $w'$ corresponds to the positions of $\Gamma$ in $w$. By $p$-periodicity and $q$-periodicity (and since $p < q = |w| - |u|$), $u$ has occurrences starting at positions $p$ and $q$. So $u$ is both a prefix and a suffix of $v$ and of $w$.
    
    If $f$ is not a suffix of $w$, there is a letter $a$ such that $fa$ is a factor of $w$ and crosses $\Gamma$ by induction hypothesis. Therefore, $f$ is immediately captured by $\Gamma$, unless $fa$ crosses $\Gamma$ on the last letter only. This last case is impossible as it implies that $f$ is a suffix of $u$ and therefore of $w$. If $f$ is a suffix of $w$ and $|f| > |v|$, then $f$ obviously crosses $\Gamma$. Thus the inductive case is proved except if in the case where $f$ is a suffix of $v$, which we consider now. 
    
    By $p$-periodicity, $v$ has an occurrence starting at position $0$. If $|v| \leq p$, this occurrence of $v$ ends in $\Gamma$ since $|u| < |v|$. Therefore the suffix $f$ has an occurrence crossing $\Gamma$. 
    Now, if $|v| > p$, write $v = uw'v'$ where $v'$ is a prefix of $v$. Since $u$ is both a prefix and a suffix of $v$, it follows that $v$ is periodic of period $|v| - |u| = q - p$, and also $p$-periodic.
    Note that $\gcd(p,q-p) = \gcd(p,q)$. 
    For the sake of contradiction, assume that $v$ is $\gcd(p,q)$-periodic. Since $w = uw'v$ is $p$-periodic, it is a fractional power of $uw'$, which is $\gcd(p,q)$-periodic (as a prefix of $v$). Since $\gcd(p,q)$ divides $|uw'| = p$, it follows that $w$ is $\gcd(p,q)$-periodic, a contradiction.
    
    As $|v| - |f| < |w| - |f|$, applying the induction hypothesis on $v$ (with periods $p$ and $q-p$) and its factor $f$, we find an occurrence of $f$ in $v$ crossing $\llbracket |v| - q + p, p-1\rrbracket = \llbracket |w| - q, p-1\rrbracket = \Gamma$. We conclude that, in this final case, $f$ is also captured by $\Gamma$ in $w$ since $v$ is a prefix of $w$.
\end{proof}

\subsection{Substitutions}

As already observed in \cite{sa_and_infinite_words}, a string attractor of $x$ induces a string attractor of the image $\varphi(x)$ of $x$ under a substitution $\varphi$. To formalize this, let us introduce some notation linking positions in $x$ and positions in $\varphi(x)$.

\begin{definition}\label{D:image of sa}
    Let $x \in \cA^*$ be a word  and let $\varphi\colon \cA^* \to \cB^*$ be a substitution.
    We define the \emph{parent of a position} $j$ in $\varphi(x)$ as follows:
    \[
        \Par{\varphi}{x}(j) = i \text{ such that } |\varphi(x_{\llbracket 0, i-1\rrbracket})| \leq j < |\varphi(x_{\llbracket 0, i\rrbracket})|.
    \]
    Conversely, the image of a position $i$ in $x$ is
    \[
        \IM{\varphi}{x}(i) =
        \left\llbracket |\varphi(x_{\llbracket 0, i-1\rrbracket})|, |\varphi(x_{\llbracket 0, i\rrbracket})| - 1 \right\rrbracket.
    \]
\end{definition}

Conceptually, $\Par{\varphi}{x}(j)$ is the position $i$ such that the image of $x_i$ crosses the position $j$ in $\varphi(x)$, and $\IM{\varphi}{x}(i) = \Par{\varphi}{x}^{-1}(i)$ is the set of positions crossed by $\varphi(x_i)$ in $\varphi(x)$. We naturally extend these notions to the case where $x$ is (one-sided) infinite or bi-infinite.

Notice that, as $\varphi$ is non-erasing, if $\Gamma$ is an interval, then $\Par{\varphi}{x}(\Gamma)$ and $\IM{\varphi}{x}(\Gamma)$ are also intervals. Furthermore, the image of a string attractor is a string attractor, as can be deduced from the proof of~\cite[Proposition 4]{sa_and_infinite_words}:

\begin{proposition}\label{P:under_morphism}
    Let $x$ be a word and $\varphi\colon \cA^* \to \cB^*$ be a substitution. If $\Gamma$ is a string attractor of $x$, then $\IM{\varphi}{x}(\Gamma)$ is a string attractor of $\varphi(x)$. 
\end{proposition}

Conversely, $\Gamma$ being a string attractor of $\varphi(x)$ does not imply in general that $\Par{\varphi}{x}(\Gamma)$ (or a small variation of it) is a string attractor of $x$. For example, it is false when $\varphi(x)$ is periodic and $x$ is not. However, it can hold under some hypothesis on $\varphi$. In the context of this work, we focus on a specific family of substitutions.

\begin{definition}[Return morphism]
    Let $w \in \mathcal{A}^*$ be non-empty. A substitution $\varphi\colon \mathcal{A}^\ast \rightarrow \mathcal{B}^\ast$ is a \emph{return morphism for $w$} if $\varphi$ is injective on $\mathcal{A}$ and, for all $a \in \mathcal{A}$, $\varphi(a)w$ contains exactly two distinct occurrences of $w$: one as a prefix and one as a suffix. We say that $\varphi$ is a \emph{return morphism} if it is a return morphism for some $w$.
\end{definition}

\begin{example}
    The substitution $\varphi\colon 0 \mapsto 01, 1 \mapsto 010, 2 \mapsto 01020$ is a return morphism for $010$ as $01010$, $010010$ and $01020010$ all start and end with $010$ but contain no internal occurrence of $010$. Note that it is also a return morphism for $01$.
\end{example}

Return morphisms are strongly related to the notions of return words and of derivation (introduced in~\cite{Durand_substitutive}) as the elements of $\varphi(\cA)$ are the return words for $w$ in $\varphi(x)$. Consequently, they possess many properties, especially in regards to recognizability (see~\cite[Chapter 4]{TheseFrance} for example). We list below the properties used in this work.

\begin{lemma}\label{L:return morphisms}
    Let $\varphi\colon \cA^* \to \cB^*$ be a return morphism for $w$ and let $x$ be a word on the alphabet $\cA$.
    \begin{enumerate}
        \item For any $u \in \cA^*$, $w$ is a prefix of $\varphi(u)w$. In other words, $\varphi(u)w$ is $|\varphi(u)|$-periodic if $u \ne \varepsilon$.
        \item For any $u \in \cL(x)$, occurrences of $u$ in $x$ correspond exactly to occurrences of $\varphi(u)w$ in $\varphi(x)w$. More precisely, $\varphi(u)w$ has an occurrence in $\varphi(x)w$ such that the $\varphi(u)$ part crosses position $j$ if and only if $u$ has an occurrence in $x$ crossing $\Par{\varphi}{x}(j)$.
        \item The substitution $\varphi$ is injective (on finite and infinite words).
        \item Assume that $\#\cA = 2$. If $p$ is the longest common prefix to all $\varphi(a)w$, $a \in \cA$, then for any right-special $u \in \cL(x)$, $\varphi(u)p$ is right-special in $\varphi(x)w$. Similarly, if $s$ is the longest common suffix to all $\varphi(a)$, $a \in \cA$, then for any left-special $u \in \cL(x)$, $s\varphi(u)w$ is left-special in $\varphi(x)w$.
    \end{enumerate}
\end{lemma}

Moreover, any return morphism is a return morphism for a right-special word: it suffices to take the longest word for which it is a return morphism (see~\cite[Proposition~6]{GheeraertLeroy}).

For return morphisms, we then have a stronger version of Proposition~\ref{P:under_morphism}. To prove it, we use the following lemma which will also play a role for eventually periodic words in Section~\ref{ss:eventually periodic}.

\begin{lemma}\label{lem:smallfactors}    
    Let $x$ be a word. Assume that $\Gamma = \llbracket n, n+k \rrbracket$ captures all factors of length at least $\ell$. If $x_{\llbracket n+k+1, n+k+\ell - 1\rrbracket}$ is left-special, then $\Gamma$ is a string attractor of $x$. The same is true if $x_{\llbracket n - \ell + 1, n - 1\rrbracket}$ is right-special.
\end{lemma}

\begin{proof}
    We proceed by induction on $\ell$. The result is trivially true for $\ell = 1$. Assume the result holds for some $\ell \geq 1$. Assume also that $\Gamma$ captures all factors of length at least $\ell+1$ and that $x_{\llbracket n+k+1, n+k+\ell\rrbracket}$ is left-special.

    All length-$\ell$ factors appear as suffix of length-$(\ell+1)$ factors, so $\Gamma$ captures all length-$\ell$ factors except possibly $x_{\llbracket n+k+1, n+k+\ell \rrbracket}$. As this factor is left-special, it is also a suffix of some length-$(\ell+1)$ factor $f \neq x_{\llbracket n+k, n+k+\ell\rrbracket}$. By hypothesis, $f$ is captured by $\Gamma$ at some position other than $n+k$. It follows that $x_{\llbracket n+k+1, n+k+\ell \rrbracket}$ is captured by $\Gamma$, so $\Gamma$ captures all length-$\ell$ factors. 
    
    Moreover, since $x_{\llbracket n+k+1, n+k+\ell-1 \rrbracket}$ is a prefix of $x_{\llbracket n+k+1, n+k+\ell\rrbracket}$, it is also left-special. By induction hypothesis, $\Gamma$ is a string attractor. 
\end{proof}

\begin{lemma}\label{L:image of sa return morphism}
    Let $x$ be a word on $\cA$ using at least two different letters, and $\varphi\colon \cA^* \to \cB^*$ be a return morphism for a right-special word $w$. If $\Gamma = \llbracket n, n+k \rrbracket$ is a string attractor of $x$, then $\IM{\varphi}{x}(\Gamma)$ without its first $|w|$ positions is a string attractor of $\varphi(x)w$.
\end{lemma}

\begin{proof}
    Let us first prove that $\IM{\varphi}{x}(\Gamma)$ contains at least $|w|+1$ positions.
    Let $a, b$ be two distinct letters appearing in $x$.
    As $\Gamma$ captures all letters of $x$, we deduce that $\# \IM{\varphi}{x}(\Gamma) \geq |\varphi(a)| + |\varphi(b)| = |\varphi(ab)|$.
    Since $\varphi$ is a return morphism for $w$, $\varphi(ab) \ne \varphi(ba)$ and $w$ is a prefix of both $\varphi(ab)w$ and $\varphi(ba)w$. In particular, $w$ is actually a proper prefix of $\varphi(ab)$ and $\varphi(ba)$.
    It follows that $|\varphi(ab)| \geq |w| + 1$. Let $\Gamma'$ denote the set $\IM{\varphi}{x}(\Gamma)$ without its first $|w|$ positions. Thus, $\Gamma' = \llbracket m, m+\ell \rrbracket$ for some $m$ and $\ell$, and $\varphi(x)_{\llbracket m-|w|, m-1\rrbracket} = w$.

    \begin{figure}[h!t]
        \centering
        \begin{tikzpicture}
    \draw (-2,0.5) node{$\varphi(x)w =$};
    \draw[dashed] (-1,0)--(0,0);
    \draw[dashed] (-1,1)--(0,1);
    \draw (0,0)--(1.5,0);
    \draw (0,1)--(1.5,1);
    \draw (4.5,1)--(1.5,1)--(1.5,0)--(4.5,0);
    \draw[fill=gray!30] (4.5,0)--(7.5,0)--(7.5,1)--(4.5,1);
    \draw (4.5,0.5) node{$\varphi(x_{\llbracket n, n+k\rrbracket})$};
    \draw (7.5,0)--(8.5,0);
    \draw (7.5,1)--(8.5,1);
    \draw[dashed] (8.5,0)--(10,0);
    \draw[dashed] (8.5,1)--(10,1);
    
    \draw[decoration={brace, mirror, raise = 0.05cm, amplitude = 5pt}, decorate] (0.2,1)-- node[below = 0.15cm, midway]{$u$} (4,1);
    
    \draw[decoration={brace, raise = 0.05cm, amplitude = 5pt}, decorate] (0,1)-- node[above = 0.15cm, midway]{$\varphi(t)$} (2.5,1);
    \draw[decoration={brace, raise = 0.05cm, amplitude = 5pt}, decorate] (2.5,1)-- node[above = 0.15cm, midway]{$w$} (5.5,1);
    
    \draw[decoration={brace, mirror, raise = 0.05cm, amplitude = 5pt}, decorate] (0,0)-- node[below = 0.15cm, midway]{$\varphi(t')$} (1.5,0);
    \draw[decoration={brace, mirror, raise = 0.05cm, amplitude = 5pt}, decorate] (1.5,0)-- node[below = 0.15cm, midway]{$w$} (4.5,0);
\end{tikzpicture}
        \caption{Contradictory case where the occurrence of $u$ ends in the $|w|$ first positions of $\IM{\varphi}{x}(\Gamma)$. The grey portion represents the positions in $\Gamma'$.}
        \label{F:image return morphism - case 1}
    \end{figure}
    
    By Lemma~\ref{lem:smallfactors}, it is sufficient to show that $\Gamma'$ captures all factors of length at least $|w|+1$ since $w$ is right-special. Take $u$ one such factor and let $t \in \cL(x)$ be a shortest word such that $u$ is a factor of $\varphi(t)w$. As $u$ is long enough, $t$ is not empty. Thus, $t$ has an occurrence in $x$ crossing the string attractor $\Gamma$. This corresponds to an occurrence of $\varphi(t)w$ in $\varphi(x)w$, where the $\varphi(t)$ part crosses $\IM{\varphi}{x}(\Gamma)$. By minimality of $t$, $u$ has an occurrence in $\varphi(x)w$ crossing $\IM{\varphi}{x}(\Gamma)$. Moreover, this occurrence cannot end in the first $|w|$ positions of $\IM{\varphi}{x}(\Gamma)$. Indeed, as these positions correspond to an occurrence of $w$, this would mean that $u$ is a factor of $\varphi(t')w$ for $t'$ a proper prefix of $t$, which contradicts the minimality of $t$. This hypothetical situation is represented in Figure~\ref{F:image return morphism - case 1}. Hence, $u$ has an occurrence in $\varphi(x)w$ crossing $\Gamma'$.
\end{proof}

For return morphisms, the converse result now holds. To simplify the statement, for $\Gamma \subseteq \Z$, $\Par{\varphi}{x}(\Gamma)$ should be understood as $\Par{\varphi}{x}(\Gamma \cap I)$ where $I$ the support of $\varphi(x)$.

\begin{lemma}\label{L:inverse image of sa return morphism}
    Let $x$ be a word, $\varphi\colon \cA^* \to \cB^*$ be a return morphism for $w$, and $\Gamma = \llbracket n, n+k \rrbracket$ be a set of positions in $\varphi(x)$. If $\Gamma$ is a string attractor of $\varphi(x)w$, then $\Par{\varphi}{x}(\llbracket n-|w|,n+k \rrbracket)$ is a string attractor of $x$.
\end{lemma}

\begin{proof}
    Let $u$ be a non-empty factor of $x$. Since $\Gamma$ is a string attractor of $\varphi(x)w$, there is an occurrence of $\varphi(u)w$ in $\varphi(x)w$ crossing $\Gamma$. We consider two cases. If the $\varphi(u)$ part crosses a position $j \in \llbracket n, n+k \rrbracket$, then there is an occurrence of $u$ in $x$ crossing $\Par{\varphi}{x}(j) \in \Par{\varphi}{x}(\llbracket n-|w|,n+k \rrbracket)$. Otherwise, the $w$ part of $\varphi(u)w$ crosses position $n$. Thus, the $\varphi(u)$ part crosses a position $j \in \llbracket n-|w|, n-1 \rrbracket$, and there is an occurrence of $u$ in $x$ crossing position $\Par{\varphi}{x}(j)\in \Par{\varphi}{x}(\llbracket n-|w|,n+k \rrbracket)$.
\end{proof}

\section{Finite string attractors}\label{S:finite string attractors}

Any finite word trivially admits a finite string attractor. The situation is different for one-sided infinite words, as seen in the result below which is an alternative formulation of~\cite[Proposition~6]{sa_and_infinite_words}.

\begin{proposition}
A one-sided infinite word $x$ admits a finite string attractor if and only if $x$ is eventually periodic, i.e., there exist $p\geq 1$ and $N \in \N$ such that $x_{n+p} = x_n$ for all $n \geq N$.
\end{proposition}

For bi-infinite words however, the answer is not as black and white; answering it is the goal of this section. To do so, we look at the minimal span of a string attractor. 

\begin{definition}[Span]
    The \emph{span} of a set $\Gamma \subseteq \Z$ is defined as $\spn(\Gamma) = \sup \Gamma - \inf \Gamma$. The \emph{(string attractor) span} of a word $x$ is defined as
    \[
        \spn(x) = \inf \{\spn(\Gamma) \mid \Gamma \text{ string attractor of } x\}.
    \]
    In particular, $\spn(x)$ is finite if and only if $x$ admits a finite string attractor.
\end{definition}

The span was first introduced for finite words in~\cite{sa_and_infinite_words} where the authors study some of its combinatorial properties. Many of their results can be adapted to infinite words. For example, the following proposition shows the link between span and factor complexity, and is a direct adaptation of~\cite[Lemma 1]{sa_and_infinite_words}. We give the proof here for the sake of completeness.

\begin{proposition}\label{P:finite SA bound comp}
    For any word $x$, we have $p_x(n) \leq n+\spn(x)$ for all $n \geq 1$. 
\end{proposition}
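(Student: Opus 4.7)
The plan is a direct counting argument: a factor of length $n$ covered by $\Gamma$ must start at one of only $n+\spn(\Gamma)$ possible positions.

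First I would dispose of the trivial case $\spn(x)=\infty$, in which the bound $p_x(n)\leq n+\spn(x)$ holds vacuously. Otherwise, by definition of span as an infimum of non-negative integers (so a minimum), I can fix a finite string attractor $\Gamma$ of $x$ with $\spn(\Gamma)=\spn(x)=:s$. Writing $a=\inf\Gamma$, we have $\Gamma\subseteq \llbracket a,a+s\rrbracket$.

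Next I would translate the covering condition into a restriction on starting positions. If $w\in\cL_n(x)$, then by definition of a string attractor there is some $i\in\Z$ with $x_{\llbracket i,i+n-1\rrbracket}=w$ and some $j\in\Gamma$ with $i\leq j\leq i+n-1$, equivalently $i\in\llbracket j-n+1,j\rrbracket$. Hence every factor of length $n$ occurs starting at some index in
\[
I_n \;=\; \bigcup_{j\in\Gamma}\llbracket j-n+1,\,j\rrbracket \;\subseteq\; \llbracket a-n+1,\,a+s\rrbracket.
\]
Since this latter interval contains exactly $n+s$ integers, we get $|I_n|\leq n+s$.

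Finally, the map $I_n\to \cL_n(x)$, $i\mapsto x_{\llbracket i,i+n-1\rrbracket}$, is surjective onto $\cL_n(x)$ by the previous step, so $p_x(n)=\Card\cL_n(x)\leq |I_n|\leq n+s=n+\spn(x)$, as required. There is no real obstacle here; the only subtle point is to note that the infimum in the definition of $\spn(x)$ is attained when finite, so one may argue with a genuine string attractor of minimal span rather than needing to pass to a limit.
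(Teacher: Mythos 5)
Your proof is correct and follows essentially the same argument as the paper's: dispose of the case $\spn(x)=\infty$, pick a string attractor realising the minimal span, observe that every length-$n$ factor must start in an interval of $n+\spn(x)$ positions, and count. Your remark that the infimum is attained when finite is a reasonable point of care that the paper glosses over.
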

\begin{proof}
    If $\spn(x)$ is infinite, we directly have $p_x(n) \leq n + \spn(x)$ for all $n$.
    Assume that $x$ admits a finite string attractor $\Gamma \subseteq \llbracket \gamma, \gamma + \spn(\Gamma)\rrbracket$ for some $\gamma \in \Z$. Let $n \geq 1$. Since every $w\in \cL_n(x)$ is captured by $\Gamma$, it has an occurrence starting in $\llbracket \gamma-n+1, \gamma+\spn(\Gamma)\rrbracket$. This implies that $p_x(n) \leq \gamma+\spn(\Gamma) - (\gamma-n+1)+1 = n+\spn(\Gamma)$. As this is true for any string attractor $\Gamma$, $p_x(n) \leq n + \spn(x)$.
\end{proof}

\begin{remark}
Observe that if $p_x(n) = n + k$ for all $n \geq n_0$ for some $n_0\geq 1$ and if $x$ has a string attractor $\Gamma = \llbracket \gamma, \gamma + s\rrbracket$, then this string attractor is eventually perfect if and only if $s = k$. More precisely, the length-$n$ factors are then each captured exactly once by $\Gamma$ for all $n \geq n_0$.
\end{remark}

By Proposition~\ref{P:finite SA bound comp}, if a bi-infinite word $x$ has a finite string attractor (or equivalently, if its span is finite), then its factor complexity is bounded by the linear function $n+\spn(x)$.
For such words, we have the following classical dichotomy.

\begin{proposition}[\cite{CovenHedlund,Coven1975}]\label{P:Coven bi-infinite words}
    Let $x$ be a bi-infinite word. There exists $k \in \N$ such that $p_x(n) \leq n+k$ for all $n$ if and only if $x$ falls into one of the following (mutually exclusive) cases:
    \begin{itemize}
        \item $x$ is eventually periodic;
        \item $x$ is aperiodic, uniformly recurrent, and there exist $n_0 \geq 0$ and $k' \geq 1$ such that $p_x(n) = n + k'$ for all $n \geq n_0$.
    \end{itemize}
\end{proposition}

Words falling in the second case are called quasi-Sturmian~\cite{Cassaigne_grouped_factors}.

\begin{definition}[Quasi-Sturmian word]
    A bi-infinite word $x$ is \emph{quasi-Sturmian} if it is aperiodic and there exist $n_0 \geq 0$ and $k \geq 1$ such that for every $n \geq n_0$, $p_x(n) = n + k$.
\end{definition}

In particular, quasi-Sturmian words are uniformly recurrent. When $k = 1$, we can show that $n_0 = 0$ and we get the well-known Sturmian words.

\begin{definition}[Sturmian word]
    A bi-infinite word $x$ is \emph{Sturmian} if it is aperiodic and its factor complexity is given by $p_x(n) = n+1$.
\end{definition}

All quasi-Sturmian words are substitutive images of Sturmian words (see Proposition~\ref{P:quasi-Sturmian morphism} for a formal statement).
Therefore, to determine which words admit a finite string attractor among the words of complexity at most $n+k$, we successively consider three families of words: eventually periodic, Sturmian, and quasi-Sturmian. 

\subsection{The eventually periodic case}\label{ss:eventually periodic}

It is easy to see that eventually periodic words admit finite string attractors. Indeed, extending Lemma~\ref{L:one period}, if $x = S^k(\cdots uuu. w vvv\cdots)$, then $\llbracket -|u| - k, |wv| - k-1\rrbracket$ is a string attractor. Let us consider their span.

\begin{remark}\label{R:periodic}
If $x \in \cA^\Z$ is purely $p$-periodic, then $x$ admits the string attractor $\llbracket i, i+p-1 \rrbracket$ for any $i \in \Z$. In particular, $\spn(x) \leq p-1$. However, the value of $\spn(x)$ depends on the exact word and not only on the period: for example, consider the 3-periodic words $x$ and $y$, where $x_{\llbracket 0, 2\rrbracket} = 012$ and $y_{\llbracket 0,2\rrbracket} = 010$. We have $\spn(x) = 2$ and $\spn(y) = 1$.
\end{remark}

For non-purely periodic words however, span and factor complexity are closely related. The next result is illustrated by Example~\ref{example:unbalanced} which is a word with span $1$ and complexity $n+1$.

\begin{proposition}\label{P:sa of periodic words}
    Let $x$ be a bi-infinite word. If $x$ is eventually periodic but not purely periodic, then $x$ admits a finite string attractor and its factor complexity is eventually equal to $n+\spn(x)$.
\end{proposition}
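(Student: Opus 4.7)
The existence of a finite string attractor is immediate from the remark just before the proposition. For the factor-complexity equality, the bound $p_x(n) \leq n + \spn(x)$ is already in Proposition~\ref{P:finite SA bound comp}; the work is to exhibit, for $n$ large enough, a string attractor of span at most $p_x(n) - n$.

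I would fix a shift and write $x = \cdots uuu.wvvv \cdots$ with $u \in \cA^q$ and $v \in \cA^p$ primitive, and introduce $L$ (resp., $R$) as the largest (resp., smallest) integer such that $x_i = x_{i-q}$ for all $i \leq L$ (resp., $x_i = x_{i+p}$ for all $i \geq R$). Consider $\Gamma = \llbracket L-q+1, R+p-1 \rrbracket$ as candidate string attractor. The $q$ consecutive positions $\llbracket L-q+1, L\rrbracket$ exhaust the residues modulo $q$ and so capture all $q$ conjugates of $u^\infty$; similarly $\llbracket R, R+p-1\rrbracket$ captures the $p$ conjugates of $v^\infty$; and every transition factor (one whose occurrence crosses the boundary interval $\llbracket L+1, R-1\rrbracket$) has its occurrence meeting $\Gamma$ since the interval $\llbracket L+1, R-1 \rrbracket$ lies inside $\Gamma$. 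Counting distinct length-$n$ factors --- $q$ left-periodic, $p$ right-periodic, and $n + R - L - 2$ transition factors (one per start in $\llbracket L-n+2, R-1\rrbracket$, all distinct for $n$ large enough since the periodic tail on either side of a transition factor determines its start position) --- gives $p_x(n) = n + (q+p+R-L-2) = n + \spn(\Gamma)$; combined with Proposition~\ref{P:finite SA bound comp}, this forces $\spn(x) = \spn(\Gamma)$ and hence the desired equality.

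The main obstacle is the degenerate case where $u$ and $v$ are cyclic rotations of the same primitive word: then the sets of left- and right-periodic length-$n$ conjugates coincide, and the naive count above overestimates $p_x(n)$ by $q$. In that case the smaller candidate $\llbracket L-q+1, R-1\rrbracket$, of span $R-L+q-2$, is already a string attractor, because covering the $q$ conjugates on one periodic side suffices to cover both; a short check confirms that the transition factors remain covered by the window's reach. A case analysis on whether $u$ and $v$ share cyclic structure, combined with the matching factor-count verifications, yields $p_x(n) = n + \spn(x)$ in both cases for all sufficiently large $n$.
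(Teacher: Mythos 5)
Your overall strategy is the same as the paper's: take the maximal left-periodic and right-periodic rays, use the candidate attractor $\Gamma = \llbracket L-q+1, R+p-1\rrbracket$, and pin down $\spn(x)$ by matching the span of $\Gamma$ against the eventual complexity and the upper bound of Proposition~\ref{P:finite SA bound comp}. However, there is a genuine gap in your proof that $\Gamma$ is a string attractor. Your coverage argument silently assumes that the windows $\llbracket L-q+1, L\rrbracket$ and $\llbracket R, R+p-1\rrbracket$ lie inside $\Gamma$, i.e., that $L-R \leq \min(p,q)-1$. But Fine and Wilf only forces the doubly-periodic overlap to satisfy $L-R+1 < p+q-1$, so it can happen (e.g.\ when $q\geq 3$) that $L > R+p-1$: then a short factor of the overlap region can have its occurrence ending in $\llbracket L-q+1, L\rrbracket$ yet lying entirely to the right of $\Gamma$, and nothing in your argument produces another occurrence crossing $\Gamma$. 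Closing this case is exactly what the paper's Lemma~\ref{L:string attractor of finite periodic word} (a string attractor for a finite word with two distinct periods $p,q$ and length in $[\max(p,q), p+q)$) is for, applied to $x_{\llbracket R, L\rrbracket}$; that lemma, or some substitute, is needed and is not a triviality. The same gap recurs in your conjugate case, where "covering the $q$ conjugates on one periodic side" via $\llbracket L-q+1, L\rrbracket$ again requires $L \leq R-1$, which is not guaranteed; the paper's treatment of that case is a full argument on the word $vuv$ with a second application of the two-period lemma, not "a short check".

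The second weak point is the complexity count. That $p_x(n)$ is eventually $n+(p+q+R-L-2)$ (resp.\ $n+(p+R-L-2)$ when the period words are conjugate) is the content of Theorem~B of Heinis's thesis, which the paper cites as a black box. Your derivation asserts, without proof, that the $n+R-L-2$ transition factors are pairwise distinct and distinct from the $p+q$ periodic factors; the one-line justification ("the periodic tail determines the start position") does not rule out, for instance, two non-overlapping transition occurrences reading the same word, or a transition factor that also occurs inside one of the periodic rays. Either supply these distinctness arguments (they need Fine–Wilf again and the maximality of $L$, minimality of $R$) or cite the result. With those two repairs --- the two-period lemma for coverage in the overlapping regime, and a real proof or citation for the eventual complexity --- your argument becomes essentially the paper's proof.
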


\begin{proof}
Let $x_{\llbracket i, +\infty\llbracket}$ denote the maximal periodic suffix of $x$ and $p$ its minimal period; in particular, all factors $x_{\llbracket i, i+k\rrbracket}$ for $k>0$ are left-special. 
Similarly, let $x_{\rrbracket-\infty,j\rrbracket}$ be the maximal periodic prefix of $x$ and $q$ its minimal period. 
Finally, let $k \geq 1$ be such that $p_x(n) = n+k$ for any large enough $n$. By Proposition~\ref{P:finite SA bound comp}, to show that $\spn(x) = k$, it suffices to find a string attractor $\Gamma$ such that $\spn(\Gamma) = k$. By~\cite[Theorem B]{these_Heinis}, we have $k = i-j+p+q-2$ if the words $x_{\llbracket j-q+1,j\rrbracket}$ and $x_{\llbracket i, i+p-1\rrbracket}$ are not (cyclic) conjugates, and $k = i-j+p-2$ if they are (that is, $p = q$ and there exist $u, v$ such that $x_{\llbracket j-q+1,j\rrbracket} = uv$ and $x_{\llbracket i, i+p-1\rrbracket} = vu$).

In the former case, since $k \geq 1$, we have $j - q + 1 < i + p - 1$. Let us show that $\Gamma = \llbracket j - q + 1, i + p - 1\rrbracket$ is a string attractor of $x$.
Take $n \geq \max(p,q)$. By $p$-periodicity, every length-$n$ factor having an occurrence starting in $\llbracket i+p, +\infty \llbracket$ has an occurrence crossing position $i + p - 1$, and is therefore captured by $\Gamma$. Similarly, every length-$n$ factor having an occurrence ending in $\rrbracket{-\infty}, j-q\rrbracket$ has an occurrence crossing position $j - q + 1$ and is captured by $\Gamma$. All other length-$n$ factors are trivially captured by $\Gamma$. This shows that $\Gamma$ captures all long enough factors. Using Lemma~\ref{lem:smallfactors}, $\Gamma$ is then a string attractor of $x$.

In the case where $x_{\llbracket j-q+1,j\rrbracket}$ and $x_{\llbracket i, i+p-1\rrbracket}$ are conjugates, we have $j + 1 < i + p - 1$. We use the same argument as above to show that $\Gamma' = \llbracket j+1,i+p-1\rrbracket$ capture all factors of length $n>p$, with the modification that factors having an occurrence ending in $\rrbracket {-\infty}, j\rrbracket$ also have an occurrence starting in $\llbracket i,i+p-1 \rrbracket$ by conjugacy, so these factors are captured by $i+p-1$.
\end{proof}

\subsection{The Sturmian case}\label{S:Sturmian}

To study string attractors of Sturmian words, we first introduce some notation.
Due to the link between factor complexity and left- (resp., right-) special factors, words of complexity $n+1$ have exactly one left- (resp., right-) special factor of each length.

\begin{definition}
    Let $x$ be a bi-infinite word of factor complexity $p_x(n) = n + 1$.
    Let $\ell_n(x)$ denote the only left-special factor of $x$ of length $n$, and $r_n(x)$ the only right-special factor of $x$ of length $n$.
\end{definition}

Observe that, for all $n \leq m$, $r_n(x)$ is a suffix of $r_m(x)$ and $\ell_n(x)$ is a prefix of $\ell_m(x)$. We then have the following characterisation of all non-purely periodic words having the string attractor $\llbracket 0,1\rrbracket$.

\begin{proposition}\label{P:unicity of words of span 1}
     A bi-infinite word $x \in \{0,1\}^\Z$ with factor complexity $n+1$ has the string attractor $\llbracket 0,1 \rrbracket$ if and only if $x_{\llbracket -n, n+1\rrbracket} = r_n(x)01\ell_n(x)$ for all $n$ or $x_{\llbracket -n, n+1\rrbracket} = r_n(x)10\ell_n(x)$ for all $n$.
\end{proposition}

\begin{proof}
    Observe that, since $p_x(n) = n+1$, $\llbracket 0,1 \rrbracket$ is a string attractor of $x$ if and only if it is a perfect string attractor, or equivalently, if and only if every length-$(n+1)$ factor of $x$ occurs exactly once in $x_{\llbracket -n,n+1\rrbracket}$ for all $n$.
    We show by induction on $N$ that $x_{\llbracket-N,N+1\rrbracket} \in r_N(x)\{01,10\}\ell_N(x)$ if and only if each length-$(n+1)$ factor occurs exactly once in $x_{\llbracket -n,n+1\rrbracket}$ for all $n \leq N$. For $N = 0$, it is clear that every letter occurs in $x_{\llbracket0,1\rrbracket}$ if and only if $x_{\llbracket0,1\rrbracket}$ belongs to $\{01, 10\}$. Now assume that the equivalence is true for $N-1$.
    
    $(\Rightarrow)$ Assume that, for all $n \leq N$, every length-$(n+1)$ factor occurs exactly once in $x_{\llbracket -n, n+1\rrbracket}$. In particular, $x_{\llbracket0,1\rrbracket} \in \{01, 10\}$ (with $n = 0$), and $\ell_N(x)$ occurs exactly once $x_{\llbracket-N+1,N\rrbracket}$ (with $n = N-1$). 
    By definition of $\ell_N(x)$, $0\ell_N(x)$ and $1\ell_N(x)$ are length-$(N+1)$ factors of $x$ so, by hypothesis, they both occur in $x_{\llbracket-N,N+1\rrbracket}$. 
    Thus, $\ell_N(x)$ occurs twice in $x_{\llbracket-N+1,N+1\rrbracket}$ and exactly once in $x_{\llbracket-N+1,N\rrbracket}$. Therefore $x_{\llbracket2,N+1\rrbracket} = \ell_N(x)$.
    We similarly show that $x_{\llbracket-N,-1\rrbracket} = r_N(x)$, which proves that $x_{\llbracket-N,N+1\rrbracket} \in r_N(x)\{01,10\}\ell_N(x)$.
    
    $(\Leftarrow)$ Assume that $x_{\llbracket-N,N+1\rrbracket} = r_N(x) 01 \ell_N(x)$, the other case is symmetric. 
    
    By induction hypothesis, as $x_{\llbracket -N+1,N\rrbracket} = r_{N-1}(x)01\ell_{N-1}(x)$, it suffices to show that each length-$(N+1)$ factor occurs once in $x_{\llbracket-N,N+1\rrbracket}$.
    
    Take $w \in \cL_{N+1}(x)$ and let $v$ be the length-$N$ suffix of $w$.
    By induction hypothesis, $v$ occurs once in $x_{\llbracket-N+1,N\rrbracket}$. If $v$ is not left-special, any occurrence of $v$ in $x$ corresponds to an occurrence of $w$. So, $w$ occurs exactly once in $x_{\llbracket-N,N\rrbracket}$, and as $v \ne \ell_N(x) = x_{\llbracket 2,N+1 \rrbracket}$, $w$ occurs exactly once in $x_{\llbracket-N,N+1\rrbracket}$. 
    
    Consequently, the only length-$(N+1)$ factor that might have no occurrence in $x_{\llbracket-N,N+1\rrbracket}$ is $0 \ell_N(x)$. Using the factor complexity of $x$, this would imply that $1 \ell_N(x)$ has exactly two occurrences. With the same argument on right-special factors, the only factor that might have no occurrence is $r_N(x) 1$, and in this case $r_N(x) 0$ has exactly two occurrences. This would imply that $0 \ell_N(x) = r_N(x) 1$ and $1 \ell_N(x) = r_N(x) 0$. However, this is impossible as it implies that $r_N(x)$ begins with both $0$ and $1$. We therefore conclude that any length-$(N+1)$ factor appears exactly once.
\end{proof}

When only considering Sturmian words, the previous proposition gives the following characterisation of characteristic words.

\begin{definition}[Characteristic Sturmian word]\label{D:characteristic words}
A Sturmian word $x$ is \emph{upper} (resp., \emph{lower}) \emph{characteristic} if
\[
    x_{\llbracket -n, n+1\rrbracket} = r_n(x)01\ell_n(x) \text{\quad (resp., } x_{\llbracket -n, n+1\rrbracket} = r_n(x)10\ell_n(x) \text{) \quad for all } n.
\]
It is \emph{characteristic} if it is lower or upper characteristic.
\end{definition}

\begin{corollary}\label{C:Sturmian words of span 1}
An aperiodic word has the string attractor $\llbracket 0,1 \rrbracket$ if and only if it is a characteristic Sturmian word.
Consequently, an aperiodic word $x$ has a string attractor of span $1$ if and only if it is, up to finite shift, a characteristic Sturmian word. Moreover, this string attractor is perfect.
\end{corollary}

Barbieri, Labbé and Starosta \cite{Barbieri_Labbe_Starosta} implicitly obtain one direction of this corollary. Namely, they prove that characteristic Sturmian words form a so-called non-trivial indistinguishable asymptotic pair with difference set $\llbracket0,1\rrbracket$, and as such, they have the string attractor $\llbracket 0,1 \rrbracket$. Their proof relies on the interpretation of Sturmian words as mechnical words of irrational slope. The technique used here is inspired by a combinatorial result of Zamboni~\cite{Zamboni}.

To understand string attractors of non-characteristic Sturmian words, we make use of the classical $S$-adic description of Sturmian words (see~\cite{arnoux_fogg_2002}).

\begin{proposition}\label{P:Sturmian S-adic representation}
Let $x$ be a Sturmian word. There exist a non-eventually constant sequence $(a_i)_{i \in \N} \in \{0,1\}^\N$, a sequence of Sturmian words $(x^{(i)})_{i \in \N}$, and a sequence $(c_i)_{i \in \N} \in \N^\N$ such that, for all $i$, $x = S^{c_i}L_{a_0} \cdots L_{a_i} (x^{(i)})$ where
\[
    L_0\colon
    \begin{cases}
        0 \mapsto 0 \\
        1 \mapsto 01
    \end{cases}
    \text{and}
    \quad
    L_1\colon
    \begin{cases}
        0 \mapsto 10 \\
        1 \mapsto 1
    \end{cases}\!\!\!.
\]
\end{proposition}

Observe that $L_0$ and $L_1$ are return morphisms for $0$ and $1$ respectively. It is also well-known that $L_0$ and $L_1$ preserve characteristic words.

\begin{lemma}\label{L:images of characteristic words}
    If $x$ is an upper (resp., lower) characteristic word, then $S L_0(x)$ and $S L_1(x)$ are two upper (resp., lower) characteristic words.
\end{lemma}

\begin{proof}
    We consider the substitution $L_0$; the proof for $L_1$ is obtained by swapping $0$ with $1$.
    Recall first that $S L_0(x)$ is Sturmian (see~\cite[Lemma 6.3.5]{arnoux_fogg_2002} for example).
    Applying $L_0$ to $x$, we have, for all $n$, $L_0(x_{\llbracket -n,n+1\rrbracket}) = L_0(r_n(x)) L_0(x_{\llbracket 0,1\rrbracket}) L_0(\ell_n(x))$.
    By Lemma~\ref{L:return morphisms}, for all $m$, $r_m(L_0(x))$ is the length-$m$ suffix of $L_0(r_n(x))0$ and $\ell_m(L_0(x))$ is the length-$m$ prefix of $L_0(\ell_n(x))$ for $n$ large enough. Moreover, $L_0(01) = 001$ and $L_0(10) = 010$. Therefore, $L_0(x)_{\llbracket -m+1,0\rrbracket} = r_m(L_0(x))$, $L_0(x)_{\llbracket 1,2\rrbracket} = x_{\llbracket 0,1\rrbracket}$ and $L_0(x)_{\llbracket 3,m+2\rrbracket} = \ell_m(L_0(x))$ for all $m$.
    This shows that $SL_0(x)$ is upper (respectively, lower) characteristic whenever $x$ is.
\end{proof}

As return morphisms, $L_0$ and $L_1$ are also well-behaved with respect to string attractors. The following link between $\spn(\Gamma)$ and $\spn(\Par{L_0}{y}(\Gamma))$ follows from the definition of the substitutions. We write $|w|_1$ the number of occurrences of $1$ in the word $w$.

\begin{lemma}\label{L:length of inverse image L_0}
    Let $y \in \{0,1\}^\Z$. If $\Gamma = \llbracket n, n+k \rrbracket$ and $\Par{L_0}{y}(\Gamma) = \llbracket m, m + \ell \rrbracket$, then $\ell = k - \left|L_0(x)_{\llbracket n+1,n+k\rrbracket}\right|_1$. 
    The same result holds for $L_1$ by replacing $1$ by $0$.
\end{lemma}

\begin{proof}
    By definition of $\Par{L_0}{y}$, we have $\Gamma = \bigcup_{i\in \llbracket m, m+\ell \rrbracket} (\IM{L_0}{y}(i) \cap \Gamma)$ where the union is disjoint and none of the intersections are empty.
    Note that $\#\IM{L_0}{y}(i) = 1$ when $y_i=0$ and $\#\IM{L_0}{y}(i) = 2$ when $y_i = 1$.
    Therefore, $\#(\IM{L_0}{y}(i) \cap \Gamma) = 1$ unless $L_0(y_i) = 01$ and the second position is in $\llbracket n+1,n+k\rrbracket$, in which case the intersection contains two elements. By definition of $L_0$, this implies that $k+1 = \# \Gamma = \ell+1 + \left|L_0(y)_{\llbracket n+1,n+k\rrbracket}\right|_1$.
\end{proof}

We then refine Lemma~\ref{L:inverse image of sa return morphism} as follows:

\begin{lemma}\label{L:inverse image of sa L_0}
    Let $y \in \{0,1\}^\Z$ and $x = L_0(y)$. Assume that $\Gamma = \llbracket n, n+k \rrbracket$ is a string attractor of $x$ and let $\Par{L_0}{y}(\Gamma) = \llbracket m, m + \ell \rrbracket$.
    \begin{enumerate}
    \item
        The set $\llbracket m-1, m+\ell\rrbracket$ is a string attractor of $y$. In particular, $\spn(y) \leq k + 1 - \left|x_{\llbracket n+1,n+k\rrbracket}\right|_1$.
    \item
        If $x_{n-1}x_n \ne 00$, then $\llbracket m, m+\ell\rrbracket$ is a string attractor of $y$. In particular, $\spn(y) \leq k - \left|x_{\llbracket n+1,n+k\rrbracket}\right|_1$.
    \item
        If $x_{n+k}x_{n+k+1} = 01$, then $\llbracket m-1, m+\ell-1\rrbracket$ is a string attractor of $y$. In particular, $\spn(y) \leq k - \left|x_{\llbracket n+1,n+k\rrbracket}\right|_1$.
    \item
        If $x_{n-1}x_n \ne 00$ and $x_{n+k}x_{n+k+1} = 01$, then $\llbracket m, m+\ell-1\rrbracket$ is a string attractor of $y$. In particular, $\spn(y) \leq k - 1 - \left|x_{\llbracket n+1,n+k\rrbracket}\right|_1$.
    \end{enumerate}
    
The same result holds for $L_1$ by switching $0$ and $1$ everywhere.
\end{lemma}
\begin{proof}
    The first claim is Lemma~\ref{L:inverse image of sa return morphism}. The bound on the span follows from Lemma~\ref{L:length of inverse image L_0} since $\left|x_{\llbracket n+1,n+k\rrbracket}\right|_1 = k - \ell$.
    The proof of the other claims relies repeatedly on the fact that a $1$ is always preceded and followed by a $0$ in $x$.
    
    For the second claim, we have either $x_{n-1}x_n = 01$ or $10$. In the first case, $\Par{L_0}{y}(\llbracket n-1, n+k \rrbracket) = \llbracket m, m+\ell\rrbracket$ so we conclude using Lemma~\ref{L:inverse image of sa return morphism}.
    If $x_{n-1}x_n = 10$ and if $w$ has an occurrence in $y$ ending at position $m-1$, then $w$ ends with $1$. The occurrences of $w$ in $y$ correspond to the occurrences of $L_0(w)0$ in $x$, and since $L_0(w)$ ends with $1$, they correspond to the occurrences of $L_0(w)$ in $x$. As $\Gamma$ is a string attractor of $x$, $L_0(w)$ has an occurrence crossing $\Gamma$ in $x$. Therefore, $w$ also has an occurrence crossing $\Par{L_0}{y}(\Gamma)$ in $y$. This ends the proof of the second claim.
    
    For the third claim, if $x_{n+k}x_{n+k+1} = 01$ and if $w$ has an occurrence in $y$ starting at position $m+\ell$, then $w$ starts with $1$. Write $w = 1u$. The occurrences of $1u$ in $y$ correspond to the occurrences of $01L_0(u)0$ in $x$, which themselves correspond to the occurrences of $1L_0(u)0$. As $\Gamma$ is a string attractor of $x$, $1L_0(u)0$ has an occurrence crossing $\llbracket n, n+k\rrbracket$, so $L_0(w) = 01L_0(u)$ crosses $\llbracket n-1, n+k-1 \rrbracket$ in $x$ and $w$ crosses $\Par{L_0}{y}(\llbracket n-1, n+k-1 \rrbracket)$ in $y$. This shows that $\Par{L_0}{y}(\llbracket n-1, n+k-1\rrbracket)$ is a string attractor of $y$. As $\Par{L_0}{y}(n+k-1) = m + \ell -1$, this ends the proof of the third claim.
    
    For the last claim, if $x_{n-1}x_n = 01$, then $\Par{L_0}{y}(\llbracket n-1,n+k-1\rrbracket) = \llbracket m,m+\ell-1\rrbracket$, so the previous paragraph directly shows that $\llbracket m, m+\ell-1\rrbracket$ is a string attractor of $y$. If $x_{n-1}x_n = 10$, then using the two previous claims, both $\llbracket m, m+\ell\rrbracket$ and $\llbracket m-1, m+\ell-1\rrbracket$ are string attractors. To show that $\llbracket m, m+\ell-1\rrbracket$ is a string attractor, it suffices to consider the words having an occurrence ending in $m-1$ and an occurrence starting in $m+\ell$. As above, such a word $w$ starts and ends with $1$, and the occurrences of $w$ in $y$ correspond to the occurrences of $1L_0(u)$ in $x$, where $w = 1u$. As $1L_0(u)$ crosses $\llbracket n, n+k\rrbracket$, $L_0(w)$ crosses $\llbracket n,n+k-1\rrbracket$, and $w$ is captured by $\Par{L_0}{y}(\llbracket n,n+k-1\rrbracket) = \llbracket m, m+\ell-1\rrbracket$.
\end{proof}

We now fully describe the span of Sturmian words.

\begin{theorem}\label{T:Sturmian no span other than 1}
    If $x$ is a Sturmian word, then
    \[
        \spn(x) = 
        \begin{cases}
            1, & \text{if, up to finite shift, $x$ is characteristic;}\\
            \infty, & \text{otherwise.}
        \end{cases}
    \]
\end{theorem}

\begin{proof}
    The fact that $\spn(x) = 1$ if and only if $x$ is, up to finite shift, a characteristic Sturmian word is Corollary~\ref{C:Sturmian words of span 1}.
    It remains to show that, if $x$ is Sturmian and it admits a finite string attractor, then $\spn(x) = 1$. Let $\Gamma = \llbracket n, n+k \rrbracket$ be a string attractor of $x$. If $k = 1$, we directly have $\spn(x) = 1$ so assume that $k \geq 2$.
    
    By Proposition~\ref{P:Sturmian S-adic representation}, there exist a non-eventually constant sequence $(a_i)_{i \in \N} \in \{0,1\}^\N$, a sequence of Sturmian words $(x^{(i)})_{i \in \N}$ and a sequence $(c_i)_{i \in \N} \in \N^\N$ such that, for all $i$, $x = S^{c_i}(L_{a_0} \cdots L_{a_i}(x^{(i)}))$.
    In particular, $\spn(x) = \spn(L_{a_0} \cdots L_{a_i}(x^{(i)}))$ for every $i$. As we are looking at the span and not the actual string attractor, we assume for simplicity's sake that $x = L_{a_0} \cdots L_{a_i}(x^{(i)})$ for every $i$.
    
    We first prove that there exists $N$ such that $\spn(x^{(N)}) < k$.
    Assume that $x = L_0(x^{(0)})$ (the other case is symmetric) and write $\llbracket m, m+\ell \rrbracket = \Par{L_0}{x^{(0)}}(\Gamma)$.
    Lemma~\ref{L:inverse image of sa L_0} implies that $x^{(0)}$ is of span at most $k-1$ (in which case we take $N = 0$) unless we are in one of the following two cases:
    \begin{enumerate}
        \item $\left|x_{\llbracket n+1, n+k \rrbracket}\right|_1 = 1$, $x_{n-1}x_n = 00$, and $x_{n+k}x_{n+k+1} \ne 01$; in which case $x^{(0)}$ admits the string attractor $\llbracket m-1, m+\ell \rrbracket$ with $\ell = k-1$. Moreover, we have $x_{\llbracket n,n+k\rrbracket} = 0^i10^j$ with $i \geq 1$, and if $j \geq 1$, then $x_{n+k+1} = 0$. This implies that $x^{(0)}_{\llbracket m-1,m+k-1\rrbracket} = 0^{i}10^j = x_{\llbracket n, n+k\rrbracket}$.
        \item $\left|x_{\llbracket n+1, n+k \rrbracket}\right|_1 = 0$ and ($x_{n-1}x_n = 00$ or $x_{n+k}x_{n+k+1} \ne 01$); in which case $x^{(0)}$ admits the string attractor $\llbracket m, m+\ell \rrbracket$ with $\ell = k$. Indeed, as $\llbracket n,n+k\rrbracket$ captures an occurrence of $1$, we must have $x_n = 1$, so $x_{n-1}x_n \ne 00$. Moreover, by hypothesis, $x_{n+k}x_{n+k+1} \ne 01$ so $x_{n+k+1} = 0$. This implies that $x^{(0)}_{\llbracket m,m+k\rrbracket} = 10^k = x_{\llbracket n, n+k\rrbracket}$.
    \end{enumerate}
    
    In both cases, $|x_{\llbracket n, n+k \rrbracket}|_1 = 1$ and $x^{(0)}$ has a string attractor $\llbracket m', m'+k \rrbracket$ such that $x^{(0)}_{\llbracket m', m'+k \rrbracket} = x_{\llbracket n, n+k \rrbracket}$.
    
    Since substitutions $L_0$ and $L_1$ alternate infinitely often, let $N$ be such that $x = L_0^{N}(x^{(N-1)})$ and $x^{(N-1)} = L_1(x^{(N)})$. 
    Iterating the reasoning above, $x^{(N-1)}$ admits the string attractor $\llbracket p, p+k\rrbracket$ for some $p$ such that $|x^{(N-1)}_{\llbracket p,p+k\rrbracket}|_1 = 1$. 
    Use the same reasoning for the substitution $L_1$ by exchanging the roles of $0$ and $1$: as $|x^{(N-1)}_{\llbracket p,p+k\rrbracket}|_0 = k \geq 2$, we do not fall into one of the two particular cases and obtain $\spn(x^{(N)}) < k$.

    By infinite descent, this shows the existence of $M$ such that $\spn(x^{(M)}) = 1$. By Corollary~\ref{C:Sturmian words of span 1}, $x^{(M)}$ is then, up to finite shift, a characteristic Sturmian word. Iterating Lemma~\ref{L:images of characteristic words}, we deduce that $x$ is also a characteristic Sturmian word, so $\spn(x) = 1$ again by Corollary~\ref{C:Sturmian words of span 1}.
\end{proof}

\subsection{The quasi-Sturmian case}

In this section, we generalise the link between Sturmian words and span $1$ to quasi-Sturmian words and any finite span. Let us make explicit the link between quasi-Sturmian and Sturmian words. A substitution $\varphi$ over a binary alphabet $\{a,b\}$ is \emph{acyclic} if $\varphi(ab) \ne \varphi(ba)$, or equivalently if $\varphi(a)$ and $\varphi(b)$ are not integer powers of the same word.

\begin{theorem}[\cite{Cassaigne_grouped_factors}]\label{T:quasi-Sturmian characterisation}
    A word $x$ is quasi-Sturmian if and only if there exist an acyclic substitution $\varphi$, an integer $m$, and a Sturmian word $y$ such that $x = S^m \varphi(y)$.
\end{theorem}

A close inspection of the proof of this result provides extra information on $\varphi$. As we need this slightly stronger version, we detail the proof found in~\cite{these_Alessandri,Cassaigne_grouped_factors} which relies on Rauzy graphs.

\begin{definition}[Rauzy graph]
Let $x$ be a bi-infinite word. The \emph{Rauzy graph of rank $n$}, $n \geq 0$, of $x$ is the graph $\cG_n(x)$ whose vertices are the length-$n$ factors of $x$, and there is an edge $u\xrightarrow{a} v$ labeled by $a\in\cA$ if and only if there exists a letter $b\in \cA$ such that $ub = av$ is a length-$(n+1)$ factor of $x$.
\end{definition}

In particular, for all $n \geq 0$, $x$ labels an infinite walk on the graph $\cG_n(x)$.
Observe also that, in $\cG_n(x)$, a path from $w$ to $v$ labelled by $u$ is such that $w$ is a prefix of $uv$. If moreover, $w = v$ and the path does not go through the vertex $w$, i.e., the path forms a loop on the vertex $w$, then $uw$ contains exactly two occurrences of $w$: at the beginning and at the end. In other words, $u$ is a \emph{return word} for $w$. Recall that return morphisms are defined so that images of letters correspond to return words.

Combining Rauzy graphs and return morphisms, we obtain the following result. Observe that, as a quasi-Sturmian word also admits exactly one left-special and one right-special word of length $n$ for large values of $n$, we extend the notation $\ell_n(x)$ and $r_n(x)$ to a quasi-Sturmian word $x$.

\begin{proposition}\label{P:quasi-Sturmian morphism}
    Let $x$ be a quasi-Sturmian word. There exist a bispecial factor $w$ of $x$, a substitution $\varphi$, a Sturmian word $y \in \{0,1\}^\Z$, and an integer $m$ such that $x = S^m \varphi(y)$ and the following conditions are satisfied:
    \begin{enumerate}
    \item $\varphi$ is a return morphism for $w$;
    \item $w0$ is a prefix of $\varphi(0)w$, $w1$ is a prefix of $\varphi(1)w$, and $\varphi(0)$ and $\varphi(1)$ end with different letters;
    \item $p_x(n) = n + |\varphi(0)| + |\varphi(1)| - |w| - 1$ for all $n \geq |w|$.
    \end{enumerate}
\end{proposition}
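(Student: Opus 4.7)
The plan is to exploit the Rauzy graph structure of $x$ at a bispecial factor of sufficiently large length. Since $x$ is quasi-Sturmian, there exists $n_0$ with $p_x(n) = n + k$ for all $n \geq n_0$, forcing $p_x(n+1) - p_x(n) = 1$ for $n \geq n_0$. Hence, for each such $n$, the graph $\cG_n(x)$ has a unique right-special vertex $R_n$ (with two outgoing edges) and a unique left-special vertex $L_n$ (with two incoming edges), every other vertex having in- and out-degree one. I would first argue that $R_n = L_n$ for infinitely many $n$: tracking how $R_n$ and $L_n$ evolve as $n$ increases (they extend by the unique forced letters until a branching absorbs them) together with uniform recurrence (Proposition~\ref{P:Coven bi-infinite words}) shows that a permanent separation $R_n \ne L_n$ would contradict either the complexity formula or the uniform recurrence. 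Fix then a bispecial factor $w$ with $|w| \geq n_0$.

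Next, I would read off the two return words $r_0, r_1$ to $w$ as the two simple loops based at $w$ in the figure-eight graph $\cG_{|w|}(x)$, and set $\varphi(0) = r_0$, $\varphi(1) = r_1$. By construction, each $w\varphi(a)$ contains exactly two occurrences of $w$, one as prefix and one as suffix, so $\varphi$ is a return morphism for $w$, and $r_0 \ne r_1$ because they correspond to the two distinct right extensions of $w$. By uniform recurrence, the positions $(p_i)_{i \in \Z}$ of the occurrences of $w$ in $x$ form an increasing bi-infinite sequence, and each block $x_{\llbracket p_i, p_{i+1}-1 \rrbracket}$ is either $r_0$ or $r_1$. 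Setting $y_i \in \{0,1\}$ accordingly and $m = p_0$ gives $x = S^m \varphi(y)$.

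To show that $y$ is Sturmian, aperiodicity is immediate: by Proposition~\ref{P:return morphisms}(2), $\varphi$ is injective on bi-infinite words, so periodicity of $y$ would transfer to $x$, which is aperiodic. For the complexity $p_y(n) = n+1$, I would use Proposition~\ref{P:return morphisms}(3) to biject factors of $y$ of length $n$ with factors of $x$ of the form $w\varphi(u)$, $u \in \cL_n(y)$, and then transfer the ``one right-special factor of each length'' structure of $\cG_n(x)$ for $n \geq |w|$ to $\cG_n(y)$ at every length $n \geq 1$, which together with $p_y(1) = 2$ forces $p_y(n) = n+1$.

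Finally, the complexity formula follows from counting vertices in $\cG_{|w|}(x)$ directly: each loop at $w$ contributes $|r_i|$ vertices and the two loops share only $w$, so $p_x(|w|) = |\varphi(0)| + |\varphi(1)| - 1$. The key point to justify here is the disjointness of the two loops apart from $w$: if some internal length-$|w|$ factor were common to $wr_0$ and $wr_1$, it would be a second bispecial factor of length $|w|$, contradicting the uniqueness of $R_{|w|} = L_{|w|} = w$. Combined with $p_x(n+1) - p_x(n) = 1$ for all $n \geq |w| \geq n_0$, this yields $p_x(n) = n + |\varphi(0)| + |\varphi(1)| - |w| - 1$ for all $n \geq |w|$. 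I expect the main obstacle to be making rigorous the figure-eight shape of $\cG_{|w|}(x)$ and the transfer of the special-factor structure from $x$ to $y$, both of which hinge on carefully combining the uniqueness of $R_n, L_n$ for $n \geq n_0$ with the return-morphism bijection of Proposition~\ref{P:return morphisms}.
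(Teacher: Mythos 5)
Your proposal is correct and follows essentially the same route as the paper: locate a bispecial factor of length at least $n_0$ (via the argument that a special factor with forever-unique extensions plus recurrence would force eventual periodicity), read the two loops of the figure-eight Rauzy graph as a return morphism, derive $y$ and transfer the unique-special-factor structure to show it is Sturmian, and count vertices for the complexity formula. The only minor imprecision is that a shared internal vertex of the two loops would be a second \emph{left-special} (in-degree~2) factor of length $|w|$, not necessarily bispecial, but the contradiction with uniqueness holds either way.
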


\begin{proof}
    Let $k$ and $n_0$ be such that $p_x(n) = n + k$ for all $n \geq n_0$.
    In other words, $r_n(x)$ and $\ell_n(x)$ are defined for all $n \geq n_0$.

    There is an index $n_1 \geq n_0$ such that $\ell_{n_1}(x)$ is bispecial. Indeed, assume by contradiction that, for all $n \geq n_0$, $\ell_n(x)$ is not bispecial, so that the unique extension of $\ell_n(x)$ to the right is $\ell_{n+1}(x)$.
    By uniform recurrence of $x$, $\ell_{n_0}(x)$ has infinitely many occurrences in $x$, so there is an index $n$ such that $\ell_{n_0}$ is a prefix and a suffix of $\ell_n(x)$.
    Since the extension is unique at each step, we find that $x_i = x_{i+n-n_0}$ for $i$ large enough, a contradiction with the aperiodicity of $x$.
    
    Let us define $w = \ell_{n_1}(x)$ and consider the Rauzy graph $\mathcal{G}_{n_1}(x)$. By construction, every vertex except $w$ has one ingoing and one outgoing edge, and $w$ has two of each. It follows that $\mathcal{G}_{n_1}(x)$ is made of two loops starting and ending in $w$; see Figure~\ref{fig:graph}.
    Let us define $\varphi$ such that $\varphi(0)$ and $\varphi(1)$ are the labels of these two loops.

    For $a \in \{0,1\}$, since $\varphi(a)$ labels a loop on $w$, $w$ occurs in $\varphi(a)w$ only as a prefix and a suffix.
    As the loops enter $w$ through different edges, $\varphi(0)$ and $\varphi(1)$ end with different letters, and as they leave through different edges, the prefix $w$ is followed by different letters in $\varphi(0)w$ and $\varphi(1)w$. In particular, $\varphi$ is injective on $\{0,1\}$, and it is therefore a return morphism. Without loss of generality, we can assume that $wa$ is a prefix of $\varphi(a)w$.
    
    \begin{figure}[h]
    \centering
    \begin{tikzpicture}[node distance=1cm, main/.style={circle, draw, minimum size=.5cm}, bezier bounding box]
    \node[main] (w) at (0,0) {$w$};
    \node[minimum size=0cm, inner sep=0cm] (w0) [above right of=w] {};
    \node[minimum size=0cm, inner sep=0cm] (w1) [above left of=w] {};
    \draw[thick] (w) edge[<-] node[near start, above] {$a$} (w0);
    \draw[thick] (w0) edge [<-, out=45,in=315, looseness=5, dotted] node[left]{$\varphi(0)$} (w);
    \draw[thick] (w) edge[<-] node[near start, above] {$\overline{a}$} (w1);
    \draw[thick] (w1) edge [<-, out=135,in=225, looseness=5, dotted] node[midway, right]{$\varphi(1)$} (w);
\end{tikzpicture}
    \caption{ \label{fig:graph}The graph $\mathcal{G}_{n_1}(x)$.}
    \end{figure}
    
    Since $x$ labels an infinite walk on $\mathcal{G}_{n_1}(x)$, there exist a word $y$ and an integer $m$ such that $x = S^m\varphi(y)$.
    Let us show that $y$ is Sturmian. Since $x$ is aperiodic, $y$ is as well. We show that $y$ has at most one left-special factor of each length, which implies that $p_y(n) = n+1$ for all $n$.
    
    Assume by contradiction that $y$ has two length-$n$ left-special factors $t_0$ and $t_1$. We assume that $n$ is minimal, so that there is only one length-$(n-1)$ left-special factor $t'$ which is then a prefix of both $t_0$ and $t_1$. Without loss of generality, assume that $t_a = t'a$ for $a \in \{0,1\}$. Since $t_a$ is left-special in $y$, $\varphi(t')\varphi(a)w$ is left-special in $x$ by Lemma~\ref{L:return morphisms}. As $wa$ is a prefix of $\varphi(a)w$, this shows that both $\varphi(t')w0$ and $\varphi(t')w1$ are left-special in $x$. This contradicts the fact that $x$ only has one left-special factor of each length $n \geq |w|$.
    
    Finally, let us show that $p_x(n) = n + |\varphi(0)| + |\varphi(1)| - |w| - 1$ for all $n \geq n_0$. In the construction of the substitution $\varphi$, $p_x(n_1)$ is the number of vertices of the graph $\cG_{n_1}(x)$, and $|\varphi(0)|$ and $|\varphi(1)|$ are the number of vertices in each loop, the loops having exactly one common vertex. It follows that $p_x(n_1) = |\varphi(0)| + |\varphi(1)| - 1$. Since $|w| = n_1 \geq n_0$ and $p_x(n) = n + k$ for all $n \geq n_0$, this ends the proof.
\end{proof}

The role played by characteristic Sturmian words in the previous section leads us to the following generalisation.

\begin{definition}[Characteristic quasi-Sturmian word]\label{D:characteristic qS}
A quasi-Sturmian word $x$ is \emph{characteristic} if there exists a finite word $u$ such that:
\[
    x_{\llbracket -n, n+|u|-1\rrbracket} = r_n(x)u\ell_n(x) \textrm{\quad for all $n$ large enough that $r_n(x)$ and $\ell_n(x)$ are defined}.
\]
\end{definition}

Note that, while not explicitly named, characteristic quasi-Sturmian words also appear in \cite[Theorem C]{Barbieri_Labbe_Starosta} in relation with indistinguishable asymptotic pairs.

\begin{remark}\label{R:shifted of characteristic}
Two characteristic quasi-Sturmian words cannot be equal up to finite shift. Indeed, if both $x$ and $S^m(x)$ are characteristic quasi-Sturmian words, then, as they have the same right-special factors, $x_{-i} = x_{m-i}$ for all $i \in \N$. By aperiodicity of quasi-Sturmian words, we must have $m = 0$.
\end{remark}

\begin{proposition}\label{P:characteristic quasi-Sturmian}
A bi-infinite word $y$ is characteristic quasi-Sturmian if, and only if, $y = S^{-|w|}\varphi(x)$ where $x$ is a characteristic Sturmian word, $w$ is a bispecial factor of $x$ and $\varphi$ is a return morphism for $w$.
\end{proposition}

\begin{proof} 
Assume that $y$ is characteristic quasi-Sturmian.
By Proposition~\ref{P:quasi-Sturmian morphism}, there exists a return morphism $\varphi$ for a bispecial $w$ such that $y$ is, up to shift, the image of a Sturmian word by $\varphi$. More precisely, as $w$ is bispecial, it is a suffix of $r_n(y)$, so $S^{-|w|}(y) = \varphi(x)$ where $x$ is a Sturmian word. Let us show that $x$ is characteristic.

By Lemma~\ref{L:return morphisms}, for all large enough $m$, $\varphi(r_m(x))w$ is a suffix of $y_{\llbracket -n, -1\rrbracket}$
for any large enough $n$. Therefore, $x_{\llbracket -m,-1\rrbracket} = r_m(x)$ for all $m$. Write $z$ the characteristic Sturmian word such that $\cL(z) = \cL(x)$ and $x_0 = z_0$. If $x \ne z$, there exists $i \geq 1$ minimal such that $x_i \ne z_i$. Thus, $r_m(x)x_{\llbracket 0, i-1\rrbracket}$ is a right-special word, and $r_m(x)x_{\llbracket 0, i-1\rrbracket} = r_{m+i}(x)$ for all $n$. This contradicts the aperiodicity of $x$. Therefore, $x=z$ so $x$ is a characteristic Sturmian word. 

Assume now that $y = S^{-|w|}(\varphi(x))$ where $x$ is characteristic Sturmian, $w$ is bispecial in $y$ and $\varphi$ is a return morphism for $w$. As $w$ is bispecial, $w$ is the longest common prefix to $\varphi(0)w$ and $\varphi(1)w$, and $\varphi(0)$ and $\varphi(1)$ end with different letters.
Observe that, as $\varphi(01) \ne \varphi(10)$, $|w| < |\varphi(01)|$ so write $\varphi(x_{\llbracket 0,1\rrbracket}) = wu$.
For all large enough $n$, $r_n(y)$ is a suffix of $\varphi(r_n(x))w$ and $\ell_n(y)$ is a prefix of $\varphi(\ell_n(x))$ by Lemma~\ref{L:return morphisms}. This shows that $y_{\llbracket -n, n+|u|-1\rrbracket} = r_n(y)u\ell_n(y)$ for all large enough $n$, so $y$ is characteristic.
\end{proof}

We then have the counterpart of Theorem~\ref{T:Sturmian no span other than 1}.

\begin{theorem}\label{T:QS possible spans}
    Let $x$ be a quasi-Sturmian word such that $p_x(n) = n+k$ for all large enough $n$. We have
    \[
        \spn(x) = 
        \begin{cases}
            k, & \text{if, up to finite shift, $x$ is characteristic;}\\
            \infty, & \text{otherwise.}
        \end{cases}
    \]
\end{theorem}

\begin{proof}
    By Proposition~\ref{P:quasi-Sturmian morphism}, we have a return morphism $\varphi$ for a bispecial word $w$, a Sturmian word $y$, and an integer $m$ such that $x = S^m \varphi(y)$ and $k = |\varphi(0)| + |\varphi(1)| - |w| - 1$.
    Assume that $\spn(x)$ is finite. By Lemma~\ref{L:inverse image of sa return morphism}, $y$ is also of finite span. By Theorem~\ref{T:Sturmian no span other than 1}, $\spn(y) = 1$ and $y$ is a characteristic Sturmian word (up to finite shift). Hence, the positions of the string attractor of $y$ correspond to $01$ or $10$. By Lemma~\ref{L:image of sa return morphism}, we have $\spn(x) \leq |\varphi(0)| + |\varphi(1)| - |w| - 1 = k$. Using the link between span and factor complexity (Proposition~\ref{P:finite SA bound comp}), this implies that $\spn(x) = k$ and $x$ is, up to finite shift, the image of a characteristic Sturmian word by a substitution. Thus $x$ is, up to finite shift, a characteristic quasi-Sturmian word by Proposition~\ref{P:characteristic quasi-Sturmian}.
    Conversely, if $x$ is the image of a characteristic Sturmian word, then it is of finite span by Theorem~\ref{T:Sturmian no span other than 1} and Proposition~\ref{P:under_morphism}.
\end{proof}

\begin{corollary}
An aperiodic word has the string attractor $\llbracket 0, k \rrbracket$ that is eventually perfect if and only if it is a characteristic quasi-Sturmian word of complexity $p_x(n) = n + k$ for any large enough $n$.
\end{corollary}
\begin{proof}
If $\llbracket 0, k \rrbracket$ is an eventually perfect string attractor of $x$, then for any large enough $n$, we have $p_x(n) = n+k$. In particular, $x$ is quasi-Sturmian. Write $u = x_{\llbracket 0,k\rrbracket}$. Using the same proof as in Proposition~\ref{P:unicity of words of span 1}, we can show that $x_{\llbracket -n, n+k\rrbracket} = r_n(x)u\ell_n(x)$ for all large enough $n$. Thus, $x$ is characteristic.

Conversely, if $x$ is a quasi-Sturmian characteristic word of complexity $p_x(n) = n + k$ for any large enough $n$, it admits a string attractor of span $k$ by Theorem~\ref{T:QS possible spans}. Therefore, there exists $m \in \Z$ such that $S^m(x)$ admits the string attractor $\llbracket 0, k \rrbracket$, and this string attractor is eventually perfect based on the complexity of $S^m(x)$. Using the first part of the proof, $S^m(x)$ is also characteristic, which implies that $m = 0$ by Remark~\ref{R:shifted of characteristic}. Therefore $x$ admits the string attractor $\llbracket 0,k\rrbracket$.
\end{proof}

\subsection{Final characterisation}
Putting together the results of this section, we obtain the following characterisation of bi-infinite words admitting a finite string attractor.

\begin{theorem}\label{T:finite string attractors}
A bi-infinite word $x$ has a finite string attractor if and only if it falls in one of the following (disjoint) cases:
\begin{itemize}
    \item it is purely periodic of minimal period $p$, in which case $p_x(n) = p > \spn(x)$ for any large enough $n$;
    \item it is eventually periodic but not purely periodic, in which case $p_x(n) = n + \spn(x)$ for any large enough $n$;
    \item it is, up to finite shift, characteristic quasi-Sturmian, in which case $p_x(n) = n + \spn(x)$ for any large enough $n$.
\end{itemize}
\end{theorem}
\begin{proof}
By Propositions~\ref{P:finite SA bound comp} and~\ref{P:Coven bi-infinite words}, if $x$ admits a finite string attractor, then $x$ falls into one of the following mutually exclusive cases: $x$ is purely periodic, $x$ is eventually periodic but not purely periodic, or $x$ is quasi-Sturmian.

All periodic words admit a finite string attractor, of span smaller than the minimal period by Remark~\ref{R:periodic}.

All eventually periodic (non-purely periodic) words admit a finite string attractor, and moreover satisfy $p_x(n) = n + \spn(x)$ for any large enough $n$ by Proposition~\ref{P:sa of periodic words}.

Finally, among the quasi-Sturmian words, only those that are characteristic up to finite shift words admit a finite string attractor, and moreover satisfy $p_x(n) = n + \spn(x)$ for any large enough $n$ by Theorem~\ref{T:QS possible spans}.
\end{proof}

\subsection{Shift spaces with finite string attractors}

Let us now translate our results in terms of shift spaces.

\begin{definition}
    A shift space is \emph{Sturmian} if it is the orbit closure of a Sturmian word, and \emph{quasi-Sturmian} if it is the orbit closure of a quasi-Sturmian word. It is in particular minimal.
\end{definition}

It is well known that every Sturmian shift space contains exactly two characteristic words. The same is true for every quasi-Sturmian shift space: they are the image of a Sturmian shift space by the return morphism given by Proposition~\ref{P:quasi-Sturmian morphism}. Thus, any characteristic quasi-Sturmian word is in the orbit of the image of one of the two characteristic Sturmian words by Theorem~\ref{T:Sturmian no span other than 1}, Proposition \ref{P:quasi-Sturmian morphism}, and Lemma~\ref{L:inverse image of sa return morphism}. By Remark~\ref{R:shifted of characteristic} and Proposition~\ref{P:characteristic quasi-Sturmian}, each such orbit contains exactly one characteristic quasi-Sturmian word.

\begin{proposition}\label{P:finite SA for QS}
    An infinite minimal shift space $\XX$ contains a word $x \in \XX$ of finite span if, and only if, $\XX$ is quasi-Sturmian. In this case, it contains infinitely countably many words with finite span, and uncountably many with infinite span.
\end{proposition}

\begin{proof}
If $\XX$ is infinite, minimal, and contains $x \in \XX$ of finite span, then $x$ must be aperiodic (otherwise $\XX$ would be finite). By Theorem~\ref{T:finite string attractors}, $x$ is quasi-Sturmian, and since $\XX$ is minimal, $\XX$ is quasi-Sturmian as the orbit closure of $x$.

Conversely, if $\XX$ is quasi-Sturmian, then it contains two characteristic quasi-Sturmian words which are of finite span by Theorem~\ref{T:QS possible spans}, as well as all of their (infinitely countably many) shifts. All other words in $\XX$ have infinite span by Theorem~\ref{T:finite string attractors}. 
\end{proof}

It follows that only periodic shift spaces have finite string attractors, falling back to a behavior similar to one-sided infinite words.

\begin{proposition}
    A shift space $\XX$ has a finite string attractor if and only if $\XX$ is finite, that is, $\XX$ is a finite union of periodic shift spaces.
\end{proposition}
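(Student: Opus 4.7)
My plan is to prove both implications of the equivalence; the finiteness of $\mathbb{X}$ will then follow.

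The converse direction is immediate: a finite union of periodic shift spaces is itself a finite set of purely periodic words, so letting $P$ be the maximum of their minimal periods, the interval $\llbracket 0, P - 1 \rrbracket$ contains a full period of each element and is therefore a finite string attractor of $\mathbb{X}$.

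For the direct direction, let $\Gamma$ be a finite string attractor of $\mathbb{X}$ of span $s$. I first show every $x \in \mathbb{X}$ is purely periodic. By Proposition~\ref{P:finite SA bound comp}, $p_x(n) \leq n + s$, so Theorem~\ref{T:finite string attractors} leaves two possibilities: $x$ is either (a) both positively and negatively periodic, or (b) up to finite shift, the image of a characteristic Sturmian word by a substitution. Possibility (b) is ruled out because then $\overline{\mathcal{O}(x)} \subseteq \mathbb{X}$ is a quasi-Sturmian shift space, and Theorem~\ref{T:QS possible spans} exhibits elements of infinite span inside it, contradicting that $\Gamma$ is a string attractor of $\mathbb{X}$. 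For possibility (a), shift-invariance of $\mathbb{X}$ ensures that $\Gamma + k$ is a string attractor of $x$ for every $k \in \mathbb{Z}$, so every factor $w$ of $x$ has occurrences with gap at most $|w| + s$, making $x$ uniformly recurrent; this rules out bi-periodic non-purely-periodic words (which contain a ``transition'' factor occurring only finitely many times). Hence $x$ is purely periodic.

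The main obstacle is to show that $\mathbb{X}$ decomposes into \emph{finitely many} orbits. I argue by contradiction: suppose $\mathbb{X}$ contains infinitely many distinct orbits, pick a representative $x^{(n)}$ from each, and by compactness of $\mathcal{A}^{\mathbb{Z}}$ extract a subsequence $x^{(n)} \to x^* \in \mathbb{X}$, where $x^*$ is purely periodic of minimal period $p^*$. Only finitely many purely periodic words have bounded minimal period, so the minimal periods $p_n$ of $x^{(n)}$ tend to infinity, and for large $n$, $p_n > p^*$ and $p^*$ is not a global period of $x^{(n)}$. Using that $x^{(n)}$ agrees with $x^*$ on $\llbracket -n, n \rrbracket$, I locate the smallest $j_n > n - p^*$ with $x^{(n)}_{j_n} \neq x^{(n)}_{j_n + p^*}$, so that $j_n \to \infty$. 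Setting $z^{(n)} := S^{j_n}(x^{(n)}) \in \mathbb{X}$ and passing to a further subsequence via pigeonhole on the pair of values at positions $0$ and $p^*$, $z^{(n)} \to z^*$ with $z^*_0 \neq z^*_{p^*}$ yet $z^*_k = z^*_{k + p^*}$ for all $k \leq -1$. By the previous step $z^*$ is purely periodic of some minimal period $q^*$. Applying Fine and Wilf's theorem to long finite subwords on the negative half-line yields period $\gcd(p^*, q^*)$ there, and the global $q^*$-periodicity propagates this to all of $\mathbb{Z}$; by minimality of $q^*$ we conclude $q^* \mid p^*$, whence $z^*_0 = z^*_{p^*}$, a contradiction. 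Therefore $\mathbb{X}$ is a finite union of periodic orbits.
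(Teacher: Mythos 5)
Your proof is correct, and its first half follows the same route as the paper: Theorem~\ref{T:finite string attractors} leaves only the eventually periodic and quasi-Sturmian cases, the quasi-Sturmian case is excluded because Theorem~\ref{T:QS possible spans} produces elements of infinite span in the orbit closure, and shift-invariance of the string attractor gives uniform recurrence, which together with eventual periodicity forces pure periodicity (Remark~\ref{R:uniformly recurrent and aperiodic imply periodic}). The genuine difference is the finiteness step: the paper disposes of it in one line by citing an external result (a shift space consisting only of purely periodic points is finite), whereas you prove that fact from scratch via compactness, extracting a limit $x^*$ of representatives of supposedly infinitely many orbits, shifting to the first place where the period $p^*$ of $x^*$ breaks, and deriving a contradiction from Fine and Wilf applied to the limit word $z^*$. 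This makes the proposition self-contained at the cost of length, and the argument is sound. One point you should make explicit: the existence of a mismatch position $j_n > n - p^*$ (as opposed to all mismatches lying to the left of $-n$) is not a consequence of the agreement of $x^{(n)}$ with $x^*$ on $\llbracket -n, n\rrbracket$ alone; it uses that $x^{(n)}$ is purely periodic, so that a single mismatch $x^{(n)}_j \neq x^{(n)}_{j+p^*}$ propagates to positions $j + m p_n$ for all $m$ and hence occurs arbitrarily far to the right. With that sentence added, the compactness argument is complete.
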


\begin{proof}
    It is known that a shift space contains only purely periodic words if and only if it is finite~\cite[Theorem 3.8]{Jeandel}.

    Assume that $\mathbb X$ is generated by a purely $p$-periodic word $x$.
    By periodicity, $\llbracket 0, p-1 \rrbracket$ is a string attractor of $\mathbb X$. For a finite union, take the largest such string attractor.
    
    Conversely, let $\mathbb X$ be a shift space with a finite string attractor and let $x \in \XX$. By Theorem~\ref{T:finite string attractors}, $x$ is quasi-Sturmian or eventually periodic. If $x$ is quasi-Sturmian, there is a point in $\mathbb X$ with infinite span by Proposition~\ref{P:finite SA for QS}, which contradicts the existence of a finite string attractor of $\XX$. Therefore, $x$ is eventually periodic. Furthermore, $x$ is also uniformly recurrent. Indeed, if $\llbracket \gamma, \gamma+k \rrbracket$ is a string attractor of $\XX$, and in particular of every $S^j(x)$, then for all $n \geq 1$, every length-$n$ factor of $x$ occurs in every length-$(2n + k)$ factor of $x$. By Remark~\ref{R:uniformly recurrent and aperiodic imply periodic}, $x$ is then purely periodic, and as this is true for any $x \in \XX$, this shows that $\XX$ only contains purely periodic words. 
\end{proof}

\subsection{Pattern attractors}\label{sec:pattern-attractors}

Let us discuss some additional properties of the finite string attractors obtained so far. We call \emph{pattern} of $x\in\mathcal A^{\Z}$ a function $p: S \to \mathcal A$, where $S\subset \mathbb N$ is a finite set containing $0$.
Note that every non-empty factor is a pattern (where $S$ is an interval). If $S = \{s_0, \dots, s_n\}$ with $0 = s_0 < s_1 < \cdots < s_n$, we write $p$ as $p(s_0)\$^{s_1-s_0-1}p(s_1)\$^{s_2-s_1-1}\cdots p(s_n)$ where $\$$ is a new symbol representing the fact that the position is not is $S$.

As mentioned previously, in \cite{Barbieri_Labbe_Starosta}, the authors characterise indistinguishable asymptotic pairs, that is, pairs $(x,y)$ of bi-infinite words that are equal except on a finite \emph{difference set} and so that every pattern crosses this difference set the same number of times in $x$ and in $y$. For aperiodic words, these pairs correspond to characteristic quasi-Sturmian words (up to finite shift). Their proof uses the fact that each member of an indistinguishable pair admits the difference set as a string attractor. In~\cite[Section 2.2]{Barbieri_Labbe_Starosta}, they seem to imply (without proof) that these string attractors are in fact \emph{pattern attractors}. We provide a proof below.

\begin{definition}
    Let $x$ be a word and $\Gamma$ be a set of positions within $x$. A pattern $p : S \to \mathcal A$ is \emph{captured by $\Gamma$} if there exists an index $i$ such that $x_{i+s} = p(s)$ for all $s\in S$ and $\Gamma \cap (i + S) \ne \emptyset$.
    If every pattern of $x$ is captured by $\Gamma$, we say that $\Gamma$ is a \emph{pattern attractor} of $x$.
\end{definition}

\begin{example}\label{ex:pattern-attract}
Let $x = \cdots001010010\underline{10}100101001\cdots$ be the Fibonacci word which is lower characteristic Sturmian. Positions $\llbracket 0,1\rrbracket$ (underlined) form a string attractor. They also capture the pattern $0\$\$1$ once (by the left position), and the pattern $0\$ 0$ twice (by the right position).
\end{example}

\begin{proposition}\label{remark:pattern}
A bi-infinite aperiodic word has the pattern attractor $\llbracket 0,1 \rrbracket$ if and only if it is a characteristic Sturmian word.
\end{proposition}
\begin{proof}
As any pattern attractor is a string attractor, using Corollary~\ref{C:Sturmian words of span 1}, it is sufficient to show that any characteristic Sturmian admits $\llbracket 0,1\rrbracket$ as a pattern attractor.

Thus, take $x$ a lower characteristic Sturmian word and $w = w(s_0)\$^{k_0}w(s_1)\$^{k_1}\cdots w(s_n)$ a pattern of $x$. A \emph{completion} of $w$ is a finite word obtained by replacing each symbol $\$$ by a letter. Among all completions of $w$ that are factors of $x$ (of length $k=s_n +1$), let $w^\uparrow$ denote the highest completion in the lexicographic order. We distinguish several cases.

\begin{itemize}
    \item If $w^\uparrow$ is the highest factor in $\mathcal L_k(x)$, then, by \cite[Proposition 2]{perrin2012note}, $w^\uparrow = 1\ell$ where $\ell$ is a left-special word. We then have $x_{\llbracket 1, k\rrbracket} = w^\uparrow$, so $w$ is captured by $\llbracket 0,1\rrbracket$ (since $1 + s_0 \in (1 + S) \cap \llbracket 0,1\rrbracket$).
    \item Otherwise, let $f$ be the consecutive factor above $w^\uparrow$ in $\mathcal L_k(x)$. By \cite[Theorem 2]{perrin2012note}, we have two possibilities:
    \begin{itemize}
        \item $f$ is obtained by flipping one factor $01$ to $10$ in $w^\uparrow$. Since $f$ does not complete $w$ (by maximality of $w^\uparrow$), this flip must be at positions $\llbracket s_j-1, s_j\rrbracket$ or $\llbracket s_j, s_{j+1}\rrbracket$ for some $0\leq j\leq n$ ; we assume the former, the other case being similar. It follows that $w^\uparrow_{\llbracket 0,s_j-2 \rrbracket}$ is right-special (or empty) and $w^\uparrow_{\llbracket s_j+1,s_n \rrbracket}$ is left-special (or empty). Therefore $x_{\llbracket -s_j+1, k-s_j \rrbracket} = w^\uparrow$ and $w$ is captured by $\llbracket 0,1\rrbracket$ (since $-s_j + 1 + s_j \in (-s_j + 1 + S) \cap \llbracket 0,1\rrbracket$).
        \item $f$ is obtained by flipping the last letter in $w^\uparrow$ from $0$ to $1$. Therefore $w^\uparrow_{\llbracket0, s_n-1\rrbracket}$ is right-special and $x_{\llbracket -k+1, 0 \rrbracket} = w^\uparrow$, which means that $w$ is captured by $\llbracket 0,1\rrbracket$ (since $-k+1 + s_n \in (-k+1 + S) \cap \llbracket 0,1\rrbracket$).
    \end{itemize}
\end{itemize}
The case of an upper characteristic Sturmian word is symmetric, by considering the lowest completion in lexicographic order.
\end{proof}

The pattern attractor built in Proposition~\ref{remark:pattern} is not perfect as some patterns can be captured multiple times, as Example~\ref{ex:pattern-attract} illustrates.

It is not difficult to see that Proposition~\ref{P:under_morphism} applies to pattern attractors as well. This leads to the following characterisation of characteristic quasi-Sturmian words. 

\begin{corollary}
A bi-infinite aperiodic word has a finite pattern attractor if and only if it is, up to finite shift, a characteristic quasi-Sturmian word.
\end{corollary}
\begin{proof}
By Theorem~\ref{T:finite string attractors} and as pattern attractors are particular string attractors, it suffices to show that any characteristic quasi-Sturmian word $x$ admits a finite pattern attractor. By Proposition~\ref{P:characteristic quasi-Sturmian}, $x$ is (up to shift) the image of a characteristic Sturmian word $y$ under a substitution. By Proposition~\ref{remark:pattern}, $y$ admits a finite pattern attractor. This implies that $x$ admits a finite pattern attractor by adapting Proposition~\ref{P:under_morphism}.
\end{proof}

\begin{corollary}
A bi-infinite word has a finite pattern attractor if and only if it has a finite string attractor.
\end{corollary}
\begin{proof}
The equivalence for aperiodic words follows the previous corollary and Theorem~\ref{T:finite string attractors}. For eventually periodic words, one easily checks that, if $x = S^k(\cdots uuu. w vvv\cdots)$, then $\llbracket -|u| - k, |wv| - k-1\rrbracket$ is a pattern attractor.
\end{proof}

While a finite pattern attractor is trivially a finite string attractor, we did not find a direct proof that a word with a finite string attractor has a finite (possibly larger) pattern attractor; we relied on the fact that these two notions are equivalent to being a characteristic quasi-Sturmian word. It may be a hint that these notions are fundamentally distinct and coincide in dimension 1 by happenstance. The situation is similar for aperiodic words that are members of an indistinguishable asymptotic pair; regarding eventually periodic words, Example~\ref{example:unbalanced} provides a word that has a finite pattern attractor but is not a member of such a pair. Note that the proofs of Proposition~\ref{remark:pattern} (using \cite{perrin2012note}) and the characterisation of \cite{Barbieri_Labbe_Starosta} both rely on the balance properties of Sturmian words and their definition as mechanical words, which is qualitatively different from the other proofs in the present work.

\section{Infinite string attractors}\label{S:infinite string attractors}

A natural question is whether anything can be said about words and shift spaces that do not admit any finite string attractor, such as most words in a Sturmian shift, the Prouhet--Thue--Morse word, or any infinite shift space. For example, we may search for string attractors with the lowest density $\lim\limits_n \frac{\#(\Gamma \cap \llbracket-n,n\rrbracket)}{2n+1}$ or log-density $\lim\limits_n \frac{\#(\Gamma\cap \llbracket-n,n\rrbracket)}{\log(2n+1)}$. The following result tells us that the lowest such value is always $0$ in the case of recurrent words; in other words, there always are arbitrarily sparse string attractors.

\begin{proposition}\label{P:Density of infi SA}
    Let $x$ be a bi-infinite recurrent word, and let $\eta$ be an arbitrary non-decreasing function such that $\lim\limits_n \eta(n) = + \infty$.
    Then $x$ has a string attractor $\Gamma$ such that $\#(\Gamma \cap \llbracket -n, n \rrbracket) \leq \eta(n)$ for all $n$.
\end{proposition}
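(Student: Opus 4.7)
My plan is a direct greedy construction. Enumerate the factors of $x$ as $u_1, u_2, u_3, \ldots$ (for instance, by increasing length, breaking ties arbitrarily). For each index $i$, set
\[
    M_i = \min\{m \geq 0 \mid \eta(m) \geq i\},
\]
which is well defined and finite because $\eta$ is non-decreasing with $\lim_n \eta(n) = +\infty$. The idea is to build a string attractor $\Gamma = \{\gamma_i \mid i \geq 1\}$ in which the position $\gamma_i$ covering $u_i$ satisfies $|\gamma_i| \geq M_i$. Recurrence is precisely what makes this possible.

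Concretely, for each $i$, since $u_i$ has infinitely many occurrences in $x$, their starting positions are unbounded, so there is an occurrence at positions $\llbracket p, p + |u_i| - 1\rrbracket$ with either $p \geq M_i$ or $p + |u_i| - 1 \leq -M_i$. In the first case take $\gamma_i = p$, in the second case take $\gamma_i = p + |u_i| - 1$; either way $\gamma_i$ lies inside an occurrence of $u_i$ and $|\gamma_i| \geq M_i$. Setting $\Gamma = \{\gamma_i \mid i \geq 1\}$, every non-empty factor of $x$ is covered by construction, so $\Gamma$ is a string attractor of $x$.

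To verify the sparsity bound, fix $n \geq 0$. If $\gamma_i \in \llbracket -n, n\rrbracket$, then $n \geq |\gamma_i| \geq M_i$, so by definition of $M_i$ and monotonicity of $\eta$ we get $\eta(n) \geq \eta(M_i) \geq i$. Hence
\[
    \#(\Gamma \cap \llbracket -n, n \rrbracket) \leq \#\{i \geq 1 \mid |\gamma_i| \leq n\} \leq \#\{i \geq 1 \mid i \leq \eta(n)\} \leq \eta(n),
\]
which is the desired bound.

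There is no serious obstacle: the only thing to be careful about is that recurrence is used via the unboundedness of the occurrence positions rather than via uniform recurrence, and that one has to work with the correct side (positive or negative) depending on where $u_i$'s occurrences accumulate. The choice $M_i = \min\{m \mid \eta(m) \geq i\}$ is essentially forced once one decides to force $|\gamma_i| \geq M_i$, and it yields the required bound after a single line of computation.
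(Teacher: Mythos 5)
Your proof is correct and follows essentially the same strategy as the paper: enumerate the factors, and use recurrence to place the covering position for the $i$-th factor at distance at least $\min\{m \mid \eta(m)\geq i\}$ from the origin, so that at most $\eta(n)$ indices contribute to $\Gamma\cap\llbracket -n,n\rrbracket$. The only cosmetic difference is that the paper picks the position closest to $0$ satisfying $\eta(|\gamma_i|)\geq i$, while you pick any sufficiently far occurrence; both yield the same bound.
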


\begin{proof}
We fix an enumeration of the non-empty factors of $x$, that is, a bijection $\textrm{enum}\colon\N\setminus\{0\} \to \mathcal L(x) \setminus \{\varepsilon\}$.
For all $i$, we define $\gamma_i$ as the position in $\mathbb Z$ closest to $0$ such that $\eta(|\gamma_i|)\geq i$ and an occurrence of $\textrm{enum}(i)$ crosses the position $\gamma_i$ in $x$. Such a position always exists since $x$ is recurrent. Notice that the same position may be chosen multiple times.
The set $\Gamma = (\gamma_i)_{i \geq 1}$ is clearly a string attractor. Moreover, if $\gamma_k \in \llbracket -n,n \rrbracket$, then by hypothesis on $\eta$ and by definition of $\gamma_k$, we have $k \leq \eta(|\gamma_k|) \leq \eta(n)$. This shows that $\Card(\Gamma \cap \llbracket -n, n \rrbracket) \leq \eta(n)$.
\end{proof}

\begin{remark}
The same proof shows that any recurrent one-sided infinite word has arbitrarily sparse string attractors.
\end{remark}
A similar result applies to minimal shift spaces:

\begin{proposition}\label{P:sparse string attractor for shift spaces}
    Let $\XX$ be a minimal shift space, and let $\eta$ be an arbitrary non-decreasing function such that $\lim\limits_n \eta(n) = + \infty$.
    Then $\XX$ has a string attractor $\Gamma$ such that $\#(\Gamma \cap \llbracket -n, n \rrbracket) \leq \eta(n)$ for all $n$.
\end{proposition}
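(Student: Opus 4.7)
The plan is to adapt the argument of Proposition~\ref{P:Density of infi SA}, with one essential modification: a single position can no longer be expected to cover a given factor $w$ simultaneously in \emph{every} $x \in \XX$, since the occurrences of $w$ are placed differently in different elements of $\XX$. However, since $\XX$ is minimal, all of its elements share the same language $\cL(\XX)$ and are uniformly recurrent, so for each factor $w$ of $\XX$ there exists an integer $R_w$ such that every length-$R_w$ factor of $\cL(\XX)$ contains an occurrence of $w$. Consequently, any interval of $R_w$ consecutive positions placed inside $\Gamma$ automatically covers $w$ in every $x \in \XX$ at once; I will use such intervals as the building blocks, in place of the single positions $\gamma_i$ used in the proof of Proposition~\ref{P:Density of infi SA}.

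Concretely, I would enumerate the non-empty factors of $\XX$ as $w_1, w_2, \ldots$, denote $R_i = R_{w_i}$, and set $s_i = R_1 + \cdots + R_i$ for each $i \geq 1$. I then build positive integers $N_1 < N_2 < \cdots$ inductively, taking $N_i \geq N_{i-1} + R_{i-1}$ to be the smallest integer satisfying $\eta(N_i) \geq s_i$; this is possible because $\eta$ is non-decreasing and unbounded. Finally, I set
\[
    \Gamma = \bigcup_{i \geq 1} \llbracket N_i,\, N_i + R_i - 1 \rrbracket.
\]

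It then remains to verify the two conditions. First, for every $x \in \XX$ and every $i \geq 1$, the factor $x_{\llbracket N_i,\, N_i + R_i - 1\rrbracket}$ has length $R_i$ and hence contains an occurrence of $w_i$; that occurrence lies inside $\llbracket N_i,\, N_i + R_i - 1 \rrbracket \subseteq \Gamma$, so $w_i$ is covered by $\Gamma$ in $x$, and $\Gamma$ is a string attractor of $\XX$. Second, given any $n$, only the intervals with $N_i \leq n$ can meet $\llbracket -n, n \rrbracket$; if no such $i$ exists the bound is trivial, and otherwise, letting $i_n$ be the largest such index,
\[
    \#(\Gamma \cap \llbracket -n, n \rrbracket) \leq \sum_{i=1}^{i_n} R_i = s_{i_n} \leq \eta(N_{i_n}) \leq \eta(n).
\]
I do not anticipate any real obstacle beyond recognising this switch from single points to intervals; everything else is direct bookkeeping mirroring the proof of Proposition~\ref{P:Density of infi SA}.
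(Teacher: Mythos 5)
Your proposal is correct and follows essentially the same route as the paper's proof: enumerate the factors, use the uniform recurrence constants $R_i$ to replace single positions by intervals $\llbracket N_i, N_i+R_i-1\rrbracket$ that cover $w_i$ in every element of $\XX$, and push these intervals far enough out that the counting bound $\#(\Gamma\cap\llbracket -n,n\rrbracket)\leq\eta(n)$ holds. Your version merely makes the placement of the intervals (which the paper leaves as ``chosen large enough'') explicit.
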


\begin{proof}
As in the word case, let us fix an enumeration of the non-empty factors of $\XX$, that is, a bijection $\textrm{enum}\colon \N \setminus \{0\} \to \cL(x)\setminus \{\varepsilon\}$. By uniform recurrence, let $c_i$ be a constant such that any length-$c_i$ interval contains an occurrence of $\textrm{enum}(i)$ in every $x\in\XX$.
Now fix $\Gamma = \bigcup_i \llbracket\gamma_i, \gamma_i+c_i -1\rrbracket$ with $\gamma_i$ chosen large enough that $\#(\Gamma \cap \llbracket -n, n \rrbracket) \leq \eta(n)$ for all $n$. By definition of $c_i$, $\textrm{enum}(i)$ has an occurrence included in $\llbracket \gamma_i, \gamma_i+c_i-1\rrbracket$ in every $x\in\XX$, so $\Gamma$ is a string attractor of $\XX$.
\end{proof}

Therefore, we cannot find a minimal or sparsest string attractor. We turn our attention to the existence of string attractors with a specific structure. In what follows, we study words and shift spaces having all (non-trivial) arithmetic progressions as string attractors. To do so, we introduce the following notation.

\begin{definition}[Occurrences $\bmod\ k$]
Given $k \geq 1$, $w\in \cA^*$ and $x\in \cA^\Z$, the set of starting positions $\bmod\ k$ of occurrences of $w$ in $x$ is denoted by
\[
    \Occ{k}{x}{w} = \{i \bmod k \mid x_{\llbracket i, i+|w|-1\rrbracket} = w\}.
\]
\end{definition}

Clearly, the set $i+k\Z$ is a string attractor of $x$ if and only if $\Occ{k}{x}{w} \cap \{(i-j) \bmod k : j \in \llbracket 0, |w|-1 \rrbracket\} \neq \emptyset$ for every non-empty factor $w$ of $x$. This suggests the following stronger definition, found in~\cite{Cassaigne_modulo_recurrent}.

\begin{definition}[Modulo-recurrence] A word $x$ is modulo-recurrent if, for every $w \in \cL(x)$ and every $k \geq 1$, $\Occ{k}{x}{w} = \Z/k\Z$.
\end{definition}

\begin{proposition}\label{P:modulo-recurrent and sa}
If $x$ is modulo-recurrent, then every arithmetic progression $i+k\Z$ is a string attractor of $x$, for $i\in\mathbb Z$ and $k \geq 1$. 
\end{proposition}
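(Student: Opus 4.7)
The plan is to derive this directly from the observation made immediately before the definition of modulo-recurrence, namely that $i+k\Z$ is a string attractor of $x$ if and only if $\Occ{k}{x}{w} \cap \llbracket i-|w|+1, i\rrbracket \neq \emptyset$ for every non-empty factor $w$ of $x$ (where the interval is taken modulo $k$).

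Concretely, I would fix arbitrary $i \in \Z$, $k \geq 1$, and a non-empty factor $w \in \cL(x)$, and then verify the condition above. By modulo-recurrence, $\Occ{k}{x}{w} = \Z/k\Z$, so the intersection in question coincides with the image of $\llbracket i-|w|+1, i\rrbracket$ in $\Z/k\Z$. Since $w$ is non-empty we have $|w| \geq 1$, so this interval contains at least one integer and its image in $\Z/k\Z$ is non-empty. Hence the intersection is non-empty, and the cited characterisation immediately gives that $i+k\Z$ is a string attractor.

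There is essentially no obstacle: the statement is a one-line consequence of unfolding the two definitions via the preceding remark. The only thing to be careful about is the interpretation of $\Occ{k}{x}{w} \cap \llbracket i-|w|+1, i\rrbracket$ as an intersection inside $\Z/k\Z$ (i.e. comparing residues), which is the convention implicit in the definition of $\Occ{k}{x}{w}$.
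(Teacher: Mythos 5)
Your proposal is correct and matches the paper's reasoning exactly: the paper gives no separate proof of this proposition precisely because it follows immediately from the characterisation stated just before the definition of modulo-recurrence, which is the same one-line argument you give. Your remark about interpreting the intersection modulo $k$ is the right reading of that characterisation.
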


From the point of view of shift spaces, modulo-recurrence corresponds to a property called total minimality that was known much earlier (see~\cite{Paul} for example), just as uniform recurrence corresponds to minimality.

\begin{definition}
A shift space $\mathbb X$ is totally minimal if it is minimal under the action of $S^k$ for all $k\geq 1$.
\end{definition}

\begin{proposition}
The shift space $\mathbb X$ is totally minimal if and only if it is the orbit closure of a uniformly recurrent modulo-recurrent word. In particular, all words of $\mathbb X$ are modulo-recurrent.
\end{proposition}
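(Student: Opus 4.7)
My plan is to prove the two implications separately, relying on the following dynamical reformulation of modulo-recurrence: a point $x \in \XX$ has dense $S^k$-orbit in $\XX$ if and only if every factor of $x$ occurs in $x$ at every residue class modulo $k$. The forward direction will follow by directly applying $S^k$-minimality to appropriate cylinders, while the backward direction will require an additional topological argument to convert the combinatorial density of orbits into $S^k$-minimality for every $k$.

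For the forward direction, I would fix any $x \in \XX$. The case $k = 1$ of total minimality gives $S$-minimality, so $\XX = \orbcl{x}$ and $x$ is uniformly recurrent. To show $x$ is also modulo-recurrent, fix $w \in \cL(x)$, $k \geq 1$ and a residue $i$ modulo $k$. The cylinder $U = \{z \in \XX : z_{\llbracket i, i + |w| - 1 \rrbracket} = w\}$ is a non-empty open subset of $\XX$, since $w \in \cL(\XX)$ implies that a suitable shift of a witness lies in $U$. By minimality of $(\XX, S^k)$, the $S^k$-orbit of $x$ meets $U$, giving an occurrence of $w$ in $x$ at a position $jk + i \equiv i \pmod k$, as required.

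For the converse, suppose $\XX = \orbcl{x}$ for a uniformly recurrent modulo-recurrent word $x$; uniform recurrence already yields $S$-minimality. Fixing $k \geq 1$, I would proceed in two steps. First, I would show that the $S^k$-orbit of $x$ is dense in $\XX$: given $y \in \XX$ and a basic cylinder $V$ around $y$ determined by the factor $u = y_{\llbracket -M, M \rrbracket} \in \cL(\XX) = \cL(x)$, modulo-recurrence of $x$ provides an occurrence of $u$ at a position $\equiv -M \pmod k$, which places some $S^{jk}(x)$ inside $V$. Second, for an arbitrary non-empty closed $S^k$-invariant $Y \subseteq \XX$, the union $\bigcup_{j = 0}^{k-1} S^j(Y)$ is closed, non-empty and $S$-invariant (since $S \cdot S^{k-1}(Y) = S^k(Y) = Y$), hence equals $\XX$ by $S$-minimality. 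Thus $x \in S^{j_0}(Y)$ for some $0 \leq j_0 < k$, and since $S^{j_0}(Y)$ is closed and $S^k$-invariant, it contains the $S^k$-orbit closure of $x$, which is all of $\XX$. This forces $Y = \XX$, so $(\XX, S^k)$ is minimal.

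The final ``in particular'' clause is then immediate by applying the forward direction at each $y \in \XX$. The main difficulty is the second step of the backward argument: turning the combinatorial input of modulo-recurrence into topological minimality requires both the density of the $S^k$-orbit of $x$ and the symmetrisation trick $Y \mapsto \bigcup_{j < k} S^j(Y)$, which upgrades an $S^k$-invariant closed subset to an $S$-invariant one so that $S$-minimality can be brought to bear.
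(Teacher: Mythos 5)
Your proof is correct, but it is organised differently from the paper's in both directions. For the forward implication, the paper argues by contradiction: assuming $0 < \Card\,\Occ{k}{x}{w} < k$, it exhibits the explicit set $\{y \mid \Occ{k}{y}{w} \subseteq \Occ{k}{x}{w}\}$ as a nonempty, closed, $S^k$-invariant proper subset (it misses $S(x)$), contradicting $S^k$-minimality; you instead invoke the equivalence of minimality with density of every orbit and intersect the $S^k$-orbit of $x$ with a cylinder, which is more direct. For the converse, the paper shows that \emph{every} $y \in \XX$ has dense $S^k$-orbit by first transferring modulo-recurrence from $x$ to $y$: for each $w$ it takes a finite factor $u$ of $x$ containing occurrences of $w$ at all residues mod $k$, and uses $S$-minimality to plant $u$ (hence all those occurrences) inside $y$, so that $\Occ{k}{y}{w} = \Z/k\Z$. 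You only establish density of the $S^k$-orbit of the single word $x$ and then upgrade this to $S^k$-minimality via the symmetrisation $Y \mapsto \bigcup_{j=0}^{k-1} S^j(Y)$, which converts a closed $S^k$-invariant set into a closed $S$-invariant one. Your route buys a cleaner separation between combinatorics (only the distinguished word $x$ needs to be examined) and topological dynamics (the standard fact that $S$-minimality plus one dense $S^k$-orbit gives $S^k$-minimality); the paper's route stays entirely at the level of languages and occurrence sets, and as a by-product re-derives modulo-recurrence of every $y \in \XX$ inside the converse itself, whereas you obtain the final ``in particular'' clause by feeding the established total minimality back into your forward direction. Both arguments are complete and of comparable length.
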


\begin{proof}
Take $x \in \mathbb X$. Because $\mathbb X$ is minimal for $S$, $x$ is uniformly recurrent. By contradiction, assume that $0 < \Card{\Occ{k}{x}{w}} < k$ for some $k>0$ and factor $w$; then $\Occ{k}{S(x)}{w} = \Occ{k}{x}{w} + 1 \not \subseteq \Occ{k}{x}{w}$, in $\Z/k\Z$. The set of words $y$ such that $\Occ{k}{y}{w} \subseteq \Occ{k}{x}{w}$ is $S^k$-invariant, nonempty, closed, and not equal to $\XX$ (it does not contain $S(x)$), so $\XX$ is not minimal for $S^k$, a contradiction. This shows that any $x \in \XX$ is modulo-recurrent.

For the other direction, take $x$ a uniformly recurrent modulo-recurrent word. Its orbit closure (under $S$) $\XX$ is minimal. In particular, the words of $\XX$ all share the same language. To show that $\XX$ is minimal under the action of $S^k$, it is enough to show that $\XX$ is the orbit closure under $S^k$ of any $y \in \XX$. Let $w \in \cL(x)$. Since $x$ is modulo-recurrent, there is a factor $u$ of $x$ in which $w$ occurs starting at every position modulo $k$. Since $\XX$ is minimal, $u$ occurs in $y$, so $\Occ{k}{y}{w} = \mathbb Z/k\mathbb Z$. It follows that $S^{nk}(y)_{[0, |w|-1]} = w$ for some $n\in\mathbb Z$. This holds for every $w \in \cL(x)$, therefore every $z\in \XX$ is in the orbit closure of $y$ under $S^k$ (by taking $w = z_{\llbracket-n,n\rrbracket}$ for all $n$).
\end{proof}

We obtain the following translation of Proposition~\ref{P:modulo-recurrent and sa}.

\begin{proposition}\label{P:kZ universal aperiodic}
    If $\mathbb X$ is a totally minimal shift space, then the sets $i+k\Z$ are string attractors of $\XX$ for every $i\in\Z$, $k \geq 1$.
\end{proposition}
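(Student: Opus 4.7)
The plan is that this proposition is essentially a direct corollary of the preceding characterisation of totally minimal shift spaces together with Proposition~\ref{P:modulo-recurrent and sa}, so very little new work is required.

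First I would fix $i \in \Z$ and $k \geq 1$, and take an arbitrary $x \in \XX$. By the previous proposition, since $\XX$ is totally minimal, $x$ is (uniformly recurrent and) modulo-recurrent. Applying Proposition~\ref{P:modulo-recurrent and sa} to $x$, the arithmetic progression $i + k\Z$ is a string attractor of $x$.

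Since this holds for every $x \in \XX$, the definition of a string attractor of a shift space (Definition after Proposition~\ref{P:finite SA bound comp}) directly gives that $i + k\Z$ is a string attractor of $\XX$. As $i$ and $k$ were arbitrary, we are done.

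There is essentially no obstacle here: the bulk of the work lies in the preceding characterisation of totally minimal shift spaces (where one has to check that total minimality is equivalent to modulo-recurrence plus uniform recurrence) and in Proposition~\ref{P:modulo-recurrent and sa} itself (where one observes that modulo-recurrence ensures each factor $w$ has occurrences in every congruence class, so any arithmetic progression $i+k\Z$ necessarily crosses an occurrence of each $w$). The present statement is simply the translation of the word-level result to the shift-space setting via the fact that all points of a totally minimal shift share the modulo-recurrence property.
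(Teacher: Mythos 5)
Your proof is correct and follows exactly the route the paper intends: the paper gives no explicit proof, presenting the proposition as a ``direct translation'' of Proposition~\ref{P:modulo-recurrent and sa} via the preceding characterisation showing that all points of a totally minimal shift space are modulo-recurrent. Your pointwise argument combined with the definition of a string attractor of a shift space is precisely that translation.
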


Sturmian words are known to be modulo-recurrent~\cite{Kabore_Tapsoba}, and Sturmian shift spaces to be totally minimal~\cite{Paul}. More generally, we have the following result.

\begin{proposition}[{\cite[Proposition 12]{Berthe.et.al}}]
Minimal dendric shift spaces are totally minimal.
Recurrent dendric words are modulo-recurrent.
\end{proposition}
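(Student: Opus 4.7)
The strategy is to reduce the total-minimality statement to a gcd condition on lengths of return words, and then invoke the Return Theorem for dendric shifts. I would derive the word statement as a consequence of the shift statement, using the fact that a recurrent dendric word has a minimal dendric orbit closure.

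First I would translate modulo-recurrence into a gcd condition. Given a minimal shift $\XX$, a factor $w$, and some $x\in\XX$, consecutive occurrences of $w$ in $x$ differ by the length of a return word to $w$. Because $\XX$ is minimal, every return word to $w$ appears in $x$ in both directions, so the set of positions of $w$ in $x$ is a coset of the subgroup of $\Z$ generated by $\{|r| : r \in R(w)\}$. Reducing modulo $k$, this shows that $\Occ{k}{x}{w}=\Z/k\Z$ for every $k\geq 1$ if and only if $\gcd\{|r| : r\in R(w)\}=1$. By the preceding proposition, total minimality of $\XX$ is therefore equivalent to this gcd being $1$ for every factor $w\in\cL(\XX)$.

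Next I would invoke the Return Theorem for minimal dendric shifts: for every factor $w$, the set $R(w)$ has exactly $|\cA|$ elements, and they form a basis of the free group $F_{\cA}$ on $\cA$. Their Parikh vectors then form the columns of a unimodular matrix $M\in GL_{|\cA|}(\Z)$. Writing $\mathbf{1}$ for the all-ones row vector, the vector of lengths $(|r_1|,\dots,|r_{|\cA|}|)$ equals $\mathbf{1}^T M$. Since $M\Z^{|\cA|}=\Z^{|\cA|}$, the integers $\mathbf{1}^T M e_i$ generate the same subgroup of $\Z$ as $\mathbf{1}^T\Z^{|\cA|}=\Z$, so their gcd equals $1$. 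This handles the shift case.

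For the word version, a recurrent dendric word $x$ has a uniformly recurrent orbit closure (recurrence combined with the tree condition forces uniform recurrence, via the uniformity coming from the bounded number of return words), which is a minimal dendric shift. Every element of this shift is modulo-recurrent by the shift statement, in particular $x$ itself. The main obstacle in this plan is the Return Theorem, which is a substantial structural result requiring a careful analysis of extension graphs and free-group combinatorics; a secondary point that would need justification is that recurrent dendric words are automatically uniformly recurrent, which is again a feature specific to the dendric language.
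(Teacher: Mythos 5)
This proposition is quoted from \cite{Berthe.et.al} and the paper offers no proof of it, so there is no internal argument to compare yours against; I can only assess your proposal on its own terms. Your overall architecture --- reduce total minimality to the condition that, for every factor $w$, the lengths of the return words to $w$ have gcd $1$, and then obtain that gcd condition from the Return Theorem via the unimodularity of the matrix of Parikh vectors --- is the right idea and is essentially how this result is established in the literature. The computation $\mathbf{1}^T M\Z^{|\cA|}=\mathbf{1}^T\Z^{|\cA|}=\Z$ is correct, and the reduction of the word statement to the shift statement is fine modulo the (true, nontrivial, and correctly flagged) fact that recurrent dendric words are uniformly recurrent.

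There is, however, a genuine gap in your first step: the set of positions of occurrences of $w$ is \emph{not} a coset of the subgroup generated by the return word lengths, and its reduction modulo $k$ need not be a coset either, so the asserted per-factor equivalence ``$\Occ{k}{x}{w}=\Z/k\Z$ for all $k$ if and only if $\gcd\{|r|:r\in R(w)\}=1$'' fails in the ``if'' direction. Concretely, let $x$ be the period-doubling word, the fixed point of $0\mapsto 01$, $1\mapsto 00$ (minimal, though of course not dendric). The return words to $0$ are $0$ and $01$, of lengths $1$ and $2$, so their gcd is $1$; yet every $1$ in $x$ sits at an odd position, hence every odd occurrence of $0$ sits at a position $\equiv 3\pmod 4$, and one checks that $\Occ{8}{x}{0}=\{0,2,3,4,6,7\}$, which misses $1$ and $5$ and is not a coset of any subgroup of $\Z/8\Z$. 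The statement you actually need quantifies over all factors on both sides: a minimal shift space is totally minimal if and only if for \emph{every} $w\in\cL(\XX)$ the gcd of the return word lengths is $1$. The direction you use is best proved by contraposition: if $S^k$ is not minimal, then $\XX$ splits into a clopen cyclic partition $Y, SY,\dots,S^{k'-1}Y$ for some $k'>1$ dividing $k$; every sufficiently long cylinder is contained in a single part, so for every sufficiently long $w$ all return word lengths are multiples of $k'$, contradicting the gcd condition. Combined with the Return Theorem this gives total minimality, and modulo-recurrence of every element then follows from the proposition preceding this one in the paper. With that repair, your proof goes through.
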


\begin{corollary}
Recurrent dendric words, and Sturmian words in particular, admit all arithmetic progressions as string attractors.

Minimal dendric shift spaces, and Sturmian shift spaces in particular, admit all arithmetic progressions as string attractors.
\end{corollary}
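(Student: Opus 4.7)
The plan is to observe that this corollary is essentially a direct combination of results already assembled in the excerpt: the cited Proposition from Berthé et al.\ gives modulo-recurrence (resp.\ total minimality) for recurrent dendric words (resp.\ minimal dendric shift spaces), and Propositions~\ref{P:modulo-recurrent and sa} and~\ref{P:kZ universal aperiodic} then translate these properties into the desired statement about arithmetic progressions being string attractors. So the proof is essentially a two-line chain of implications.

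More precisely, I would proceed as follows. For the first assertion, let $x$ be a recurrent dendric word. By the cited Proposition from Berthé et al., $x$ is modulo-recurrent. Applying Proposition~\ref{P:modulo-recurrent and sa}, every set of the form $i + k\Z$ with $i \in \Z$ and $k \geq 1$ is a string attractor of $x$. For the second assertion, let $\XX$ be a minimal dendric shift space; by the same cited result, $\XX$ is totally minimal, and Proposition~\ref{P:kZ universal aperiodic} then yields that every $i + k\Z$ is a string attractor of $\XX$.

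It remains to justify the ``in particular'' clauses concerning Sturmian words and Sturmian shift spaces. I would simply recall that Sturmian words are known to be dendric (this is standard; it follows from the characterisation of Sturmian words as those binary aperiodic words whose extension graphs of all factors are trees, which is immediate from their balanced structure and factor complexity $n+1$), and are recurrent (in fact uniformly recurrent), so they fit the hypotheses of the first part; likewise, Sturmian shift spaces are minimal dendric, so the second part applies. Alternatively, one may cite directly the results of Kaboré–Tapsoba and Paul mentioned just above the corollary, which give modulo-recurrence of Sturmian words and total minimality of Sturmian shift spaces without going through dendricity.

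There is no real obstacle here: the corollary is purely a packaging statement, and all the work has already been done in Propositions~\ref{P:modulo-recurrent and sa} and~\ref{P:kZ universal aperiodic} together with the external input from Berthé et al. The only mild care is to make sure the recurrence hypothesis is actually used (it is needed for Proposition~\ref{P:modulo-recurrent and sa}, since non-recurrent words cannot be modulo-recurrent in any meaningful way), and that one explicitly records Sturmian as a subcase of dendric.
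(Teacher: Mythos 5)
Your proposal is correct and matches exactly the (implicit) argument of the paper: the corollary is obtained by chaining the Berthé et al.\ proposition (recurrent dendric $\Rightarrow$ modulo-recurrent; minimal dendric $\Rightarrow$ totally minimal) with Propositions~\ref{P:modulo-recurrent and sa} and~\ref{P:kZ universal aperiodic}, the Sturmian case being a special instance (or handled directly via the Kaboré--Tapsoba and Paul citations given just above). The only minor imprecision is that recurrence is consumed by the Berthé et al.\ input rather than by Proposition~\ref{P:modulo-recurrent and sa} itself, but this does not affect the validity of the argument.
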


On the other hand, quasi-Sturmian words may or may not be modulo-recurrent. If they are not, they may or may not admit every arithmetic progression as a string attractor, as we show below. This implies that modulo-recurrence (and total minimality) are not equivalent to having all arithmetic progressions as string attractors.

\begin{proposition}\label{prop:total_occurrences_examples}
Let $x \in \{0,1\}^\Z$ be any Sturmian word.
\begin{enumerate}
\item
Let $\psi$ be the substitution defined by $\psi(0) = 01$ and $\psi(1) = 00$. The set $2\Z$ is not a string attractor of $\psi(x)$.
\item
Let $\varphi$ be the substitution defined by $\varphi(0) = 01$ and $\varphi(1) = 10$. The word $\varphi(x)$ is not modulo-recurrent but it has every arithmetic progression as a string attractor.
\end{enumerate}
\end{proposition}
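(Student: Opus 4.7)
\emph{Proof plan.} All three assertions exploit the fact that $\psi$ and $\varphi$ are uniform substitutions of length $2$, so the parity of a position in the image indicates its alignment with the image blocks. Throughout, I use the standing convention that $\varphi(x)_{\llbracket 2j, 2j+1\rrbracket} = \varphi(x_j)$ (and similarly for $\psi$).

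For assertion~(1), both $\psi(0)$ and $\psi(1)$ begin with $0$, so every even position of $\psi(x)$ carries the letter $0$. Since $1 \in \cL(\psi(x))$, no occurrence of the factor $1$ crosses any even position, so $2\Z$ is not a string attractor of $\psi(x)$.

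For the first part of assertion~(2), note that $\varphi(a) = a(1-a)$ for both $a \in \{0,1\}$, so the pair of letters at positions $2j,2j+1$ of $\varphi(x)$ equals $x_j(1-x_j) \in \{01,10\}$. The factor $11$ thus never occurs at an even position of $\varphi(x)$, but since $01 \in \cL(x)$ it does occur at odd positions, yielding $\Occ{2}{\varphi(x)}{11} = \{1\}$; hence $\varphi(x)$ is not modulo-recurrent.

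For the second part of assertion~(2), I would fix a non-empty factor $w$ of $\varphi(x)$ and an arithmetic progression $i + k\Z$, and show that some occurrence of $w$ crosses it. Each occurrence of $w$ at a position $p$ with $\epsilon = p \bmod 2$ is determined by an occurrence in $x$ of a shorter factor $u_\epsilon$ starting at $\lfloor p/2 \rfloor$, and conversely every occurrence of $u_\epsilon$ in $x$ produces an occurrence of $w$ in $\varphi(x)$ of parity $\epsilon$. Since Sturmian words are modulo-recurrent, any such $u_\epsilon$ that exists occurs at every residue modulo $k$ in $x$, so the residues modulo $k$ of parity-$\epsilon$ occurrences of $w$ form $\{2r + \epsilon \bmod k : r \in \Z\}$. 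When $k$ is odd this is all of $\Z/k\Z$ and $w$ is trivially covered. When $k$ is even and $|w| \geq 2$, the window $\{i-|w|+1, \ldots, i\}$ contains residues of both parities modulo $k$, while the occurrences of $w$ hit every residue of at least one parity, so the two sets intersect. The delicate case is $k$ even with $|w|=1$: here I would check directly, using $\varphi(x)_{2j} = x_j$ and $\varphi(x)_{2j+1} = 1-x_j$ together with the modulo-recurrence of $x$, that each letter appears at every residue modulo $k$ in $\varphi(x)$, closing the argument. The main obstacle is precisely this short-factor case, where parity alone is insufficient and one must invoke the modulo-recurrence of $x$ separately for each letter and each parity; all other configurations reduce transparently to the modulo-recurrence of $x$.
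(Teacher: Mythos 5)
Your proposal is correct and follows essentially the same route as the paper: the parity obstruction for $\psi$, a single factor ($11$ in your case, $01$ in the paper's) occurring at only one parity to refute modulo-recurrence of $\varphi(x)$, and for the covering claim the same split into $|w|=1$ versus $|w|\geq 2$, pulling back to the modulo-recurrence of $x$ through the $2$-uniform substitution (your parity-$\epsilon$ bookkeeping is the paper's extension of $w$ to $w'=\varphi(u)$ and shift by the offset $\delta\in\{0,1\}$ in different clothing). The remaining steps you defer, in particular the $k$ even, $|w|=1$ check via $\varphi(x)_{2j}=x_j$ and $\varphi(x)_{2j+1}=1-x_j$, are routine and match the paper's treatment.
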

\begin{proof}
The first claim is direct: every even position in $\psi(x)$ contains a $0$, so the factor $1$ is not captured by $2\Z$.

We now consider $y = \varphi(x)$. Without loss of generality, we assume that $00$ is a factor of $x$ but $11$ is not. Therefore, $01$ is a factor of $y$ but it only occurs starting at even positions, so $\Occ{2}{y}{01} = \{0\}$. This shows that $y$ is not modulo-recurrent.

Now take $i\in\Z$, $k \geq 1$, and a non-empty factor $w$ of $y$. First assume that $|w| = 1$ and consider the set of positions $(i+k\Z) \cap 2\Z$. If it is nonempty, then it is an infinite arithmetic progression in $y$, which corresponds to an arithmetic progression in $x$ seeing the same letters. By modulo-recurrence of $x$, the letter $w$ is then captured by $i+k\Z$ in $y$. If $(i+k\Z) \cap 2\Z$ is empty, do the same argument with $(i+k\Z) \cap (2\Z+1)$ and $\overline{x}$ where $\overline{x}_n = 1-x_n$.

Now assume that $|w| > 1$. By adding at most one letter to the left and to the right, $w$ can be extended to $w' = \varphi(u)$ where $u$ is a factor of $x$. The word $x$ is modulo-recurrent, so for all $k$, $\Occ{k}{x}{u} = \Z/k\Z$. This implies that $\Occ{k}{y}{w'}$ contains at least all even elements of $\Z/k\Z$. We have either $\Occ{k}{y}{w} = \Occ{k}{y}{w'}$ or $\Occ{k}{y}{w} = \Occ{k}{y}{w'}-1$, so $\Occ{k}{y}{w}$ contains either all even or all odd elements. It follows that either $i$ or $i-1$ is in $\Occ{k}{y}{w}$, and since $|w|>1$, $i+k\mathbb Z$ captures $w$.
\end{proof}

\begin{remark}
The proof can be extended to show that, if $x = \varphi(y)$ with $y$ modulo-recurrent and $\varphi$ a $\ell$-uniform substitution, then every $i+k\Z$ captures every factor of length at least $\ell$ in $x$. To determine whether $i+k\Z$ is a string attractor, it is enough to check smaller factors.
\end{remark}

We end with a comment on the intuition behind modulo-recurrence and total minimality: the word or shift space does not have any hidden periodic structure. Hidden structure can be uncovered through cellular automata.

\begin{definition}\label{P:Curtis-Hedlund-Lyndon}
    A map $\pi\colon \cA^\Z \to \cB^\Z$ is a cellular automaton if and only if there exist an integer $M \geq 1$ and a map $\psi\colon \cA^M \to \cB$ such that $\pi(x)_n = \psi(x_{\llbracket n, n+M-1\rrbracket})$ for all $n \in \Z$.
\end{definition}

See~\cite{Lind_Marcus} for example for other equivalent definitions.
In the context of shift spaces, $\pi$ is also called a \emph{factor map} from $\XX$ to $\pi(\XX)$. While the following result is known for shift spaces (see for example \cite{Paul} for the difficult direction), we provide a combinatorial proof in the context of a single word.

\begin{proposition}\label{P:total uniform recurrence and periodic factors}
A uniformly recurrent bi-infinite word $x$ is modulo-recurrent if and only if it is completely aperiodic, i.e., for any cellular automaton $\pi$, $\pi(x)$ is aperiodic or the constant word.

A minimal shift space $\XX$ is totally minimal if and only if it is completely aperiodic, i.e., for any cellular automaton $\pi$, $\pi(\XX)$ is strongly aperiodic (does not contain any purely periodic word) or contains a single constant word.
\end{proposition}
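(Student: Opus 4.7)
\emph{Forward direction.} Let $\pi$ be a cellular automaton with local rule $\psi\colon \cA^M \to \cB$, and assume $x$ is modulo-recurrent. Since cellular automata commute with $S$ and are continuous, $\pi(x)$ is uniformly recurrent, so if it is eventually periodic then it is purely periodic with some period $p$ by Remark~\ref{R:uniformly recurrent and aperiodic imply periodic}. Suppose for contradiction that $\pi(x)$ is purely periodic and non-constant, and pick a letter $a \in \cB$ appearing in $\pi(x)$. Since $\pi(x)_n = a$ iff $x_{\llbracket n, n+M-1 \rrbracket} \in \psi^{-1}(a)$, one can fix some $w \in \psi^{-1}(a) \cap \cL_M(x)$; modulo-recurrence of $x$ then forces $w$ to occur at every residue modulo $p$, so $a$ appears at every residue of $\pi(x)$ modulo $p$, which combined with $p$-periodicity makes $\pi(x)$ constantly $a$, a contradiction. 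The shift space version follows by applying this to any $x \in \XX$, since total minimality ensures every such $x$ is modulo-recurrent by the preceding proposition.

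\emph{Converse direction.} I argue by contrapositive: assume $x$ is not modulo-recurrent. Fix $w \in \cL(x)$ and $k \geq 2$ with $R := \Occ{k}{x}{w}$ a nonempty proper subset of $\mathbb{Z}/k\mathbb{Z}$, and set
\[
    Y = \{y \in \orbcl{x} : \Occ{k}{y}{w} \subseteq R\}.
\]
This set is closed (an intersection of cylinder complements), $S^k$-invariant, and contains $x$. Because $R$ is neither empty nor the full group, its stabiliser $\{\ell : R - \ell = R\}$ in $\mathbb{Z}/k\mathbb{Z}$ is a proper subgroup, so some $\ell$ satisfies $R - \ell \not\subseteq R$, giving $S^\ell(x) \notin Y$ and hence $Y \subsetneq \orbcl{x}$. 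Picking any nonempty minimal closed $S^k$-invariant subset $\mathcal{Z} \subseteq Y$ and its shifts $\XX_i := S^i(\mathcal{Z})$ for $i = 0, \ldots, d-1$, where $d$ is the smallest positive integer with $S^d(\mathcal{Z}) = \mathcal{Z}$, the $S$-minimality of $\orbcl{x}$ forces $\orbcl{x} = \bigsqcup_{i=0}^{d-1} \XX_i$ to be a partition into clopen $S^d$-minimal pieces cyclically permuted by $S$, with $d \geq 2$ since $\mathcal{Z}$ is proper.

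Finally, I turn this clopen partition into a cellular automaton. Choose $L$ large enough that each $\XX_i$ is determined by length-$L$ prefixes, so that $\cL_L(\orbcl{x}) = \bigsqcup_{i=0}^{d-1} W_i$ with $y \in \XX_i$ iff $y_{\llbracket 0, L-1 \rrbracket} \in W_i$, and define $\psi(u) = i$ when $u \in W_i$, extending it arbitrarily elsewhere. If $x \in \XX_{i_0}$, then $S^n(x) \in \XX_{(i_0 + n) \bmod d}$, so $\pi(x)_n \equiv i_0 + n \pmod d$ is purely periodic of period $d \geq 2$ and takes all $d$ values, hence is non-constant. The shift space version follows identically: for any $x \in \XX$, the resulting $\pi(x) \in \pi(\XX)$ is purely periodic and non-constant, so $\pi(\XX)$ is neither strongly aperiodic nor a single constant word. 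The main obstacle is producing the cyclic decomposition of $\orbcl{x}$ into $S^d$-minimal pieces with $d \geq 2$; the properness of $Y$ is the pivotal step that converts a local failure of modulo-recurrence at $(w, k)$ into a global cyclic structure on the orbit closure, which then translates mechanically into a cellular automaton rule.
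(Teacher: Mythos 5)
Your proof is correct, but the converse direction takes a genuinely different route from the paper's. The forward direction is essentially the paper's argument (the paper phrases it as a contraposition, you as a direct proof, but the core step --- a factor $w$ with $\psi(w)=a$ occurring at every residue modulo the period forces $\pi(x)$ to be constant --- is identical). For the converse, the paper stays entirely combinatorial: from $i\notin\Occ{k}{x}{w}$ it builds an explicit local rule detecting occurrences of $w$ at positions divisible by $k$ inside a window long enough (via uniform recurrence of a long factor $u$ witnessing all residues of $\Occ{k}{x}{w}$) that $\pi(x)_m=1$ if and only if $m\bmod k\in\Occ{k}{x}{w}$. You instead pass to topological dynamics: the closed, $S^k$-invariant, proper subset $Y=\{y\in\orbcl{x}\mid\Occ{k}{y}{w}\subseteq R\}$ yields, via a minimal $S^k$-subsystem, the classical cyclic decomposition of $\orbcl{x}$ into $d\geq 2$ clopen pieces permuted by $S$, and the cellular automaton simply reads off the piece. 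This is the dynamical proof the paper deliberately avoids (it cites~\cite{Paul} for the shift-space version and advertises a combinatorial proof for single words); your version is more conceptual and shows in addition that the period of $\pi(x)$ is a divisor $d$ of $k$, while the paper's is self-contained and makes the image of the automaton transparent. Two small points to tighten: first, justify that membership in the clopen pieces can be read from a one-sided window $y_{\llbracket 0,L-1\rrbracket}$ as required by the paper's definition of cellular automaton --- a clopen set a priori depends on a two-sided window $y_{\llbracket -N,N\rrbracket}$, but the cocycle relation $i(S(y))=i(y)+1\bmod d$ lets you recover $i(y)$ from $i(S^{N}(y))$, which is determined by $y_{\llbracket 0,2N\rrbracket}$; second, in the shift-space forward direction, add the one line that if some $\pi(y)$ is constant then $\pi(\XX)$, being minimal and containing a fixed point of $S$, is that single constant word.
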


\begin{proof}
We only prove this result in the case of a uniformly recurrent word $x$. We proceed by contraposition.
Assume first that $x$ is not completely aperiodic, and there exists a cellular automaton $\pi$ such that $\pi(x)$ is neither aperiodic nor constant. If $x$ is uniformly recurrent, so is $\pi(x)$. Therefore, by Remark~\ref{R:uniformly recurrent and aperiodic imply periodic}, $\pi(x)$ is purely periodic. Let $k$ be its period and $0 < i < k$ be such that $\pi(x)_i \ne \pi(x)_0$. If $\psi\colon \cA^M \to \cB$ is the local rule corresponding to $\pi$ by Proposition~\ref{P:Curtis-Hedlund-Lyndon}, and if $w = x_{\llbracket 0, M-1\rrbracket}$, then $\psi(w) = \pi(x)_0$ and, for all $j \equiv i \pmod k$, we have $\psi(x_{\llbracket j, j+M-1\rrbracket}) = \pi(x)_j = \pi(x)_i \ne \psi(w)$. This shows that $i \not \in \Occ{k}{x}{w}$ so $x$ is not modulo-recurrent.

Conversely, assume that $i\notin\Occ{k}{x}{w}$ for some $w \in \cL(x)$ and $0 \leq i < k$. Let us find a cellular automaton $\pi$ such that $\pi(x)$ is purely periodic but not constant.
For each $j \in \Occ{k}{x}{w}$, let us fix $n_j \in \Z$ such that $n_j \equiv j \pmod k$ and $x_{\llbracket n_j, n_j + |w| - 1\rrbracket} = w$. We define $N = |w| + \max\{|n_j| \mid j \in \Occ{k}{x}{w}\}$ and $u = x_{\llbracket-N, N\rrbracket}$. Let $M$ be the uniform recurrence bound for $u$, i.e., every length-$M$ factor of $x$ contains an occurrence of $u$.

Observe that, for every position $m \in \Z$, we have
\[
    \{j \bmod k \mid j \in \llbracket m, m + M - |w|\rrbracket \text{ and } x_{\llbracket j, j+|w|-1\rrbracket} = w\} \subseteq \Occ{k}{x}{w}.
\]
Moreover, since $x_{\llbracket m,m+M-1\rrbracket}$ contains an occurrence of $u$, the two sets have the same cardinality so they are equal. Consider the following local rule:
\[
    \psi\colon\left(\begin{array}{ccc}
        \cA^M & \rightarrow & \{0,1\} \\
        v & \mapsto & \begin{cases}
            1, & \text{ if there is an occurrence of $w$ in $v$ whose position is a multiple of $k$;}\\
            0, & \text{ otherwise.}
        \end{cases}
    \end{array}\right)
\]
By definition, if $\psi(x_{\llbracket m, m+M-1\rrbracket}) = 1$, then $(m \bmod k) \in \Occ{k}{x}{w}$. Conversely, if $(m \bmod k) \in \Occ{k}{x}{w}$, then by the observation above, there exists $j \in \llbracket m, m + M - |w|\rrbracket$ such that $m \equiv j \pmod k$ and $x_{\llbracket j, j+|w|-1\rrbracket} = w$. In other words, $\psi(x_{\llbracket m, m+M-1\rrbracket}) = 1$.

If $\pi$ is the cellular automaton associated with $\psi$ as in Proposition~\ref{P:Curtis-Hedlund-Lyndon}, then $\pi(x)_m = 1$ if and only if $(m \bmod k) \in \Occ{k}{x}{w}$. This implies that the word $\pi(x)$ is purely periodic, but as there exists $i \not \in \Occ{k}{x}{w}$, it is not constant.
\end{proof}

\section{Further work}

We completely characterised words on $\Z$ that admit a finite string attractor, and we found some properties of words that admit every arithmetic progression as string attractor. Are there any other infinite sets or structures such that words that admit this set as string attractor form a natural or interesting class? In a different direction, could the notion of pattern attractor prove useful in characterising other families of finite words, similarly to the existing literature on string attractors?

The definition of string (or pattern) attractor can be extended for higher-dimensional words~\cite{2D}. One easily sees that, as in dimension 1, having a finite attractor heavily constrains the factor complexity. In the process of generalising indistinguishable asymptotic pairs to higher dimensions, Barbieri and Labbé \cite{barbieri2025indistinguishable} provide examples of multidimensional Sturmian words that admit finite string attractors and their results hint that it could be an characterisation in a restricted case: see Question 1 in~\cite{barbieri2025indistinguishable}. We hope these notions prove useful in improving our understanding of low-complexity words which are not well-understood in higher dimension, as is highlighted by the infamous Nivat's Conjecture. 

\section*{Acknowledgements}
We thank Francesco Dolce and Giuseppe Romana for initial discussions on the topic and anonymous reviewers for many constructive comments.

During part of this research, France Gheeraert was a Research Fellow of the Fonds de la Recherche Scientifique - FNRS.
This research was supported by the ANR-22-CE40-0011 Inside Zero Entropy Systems.

\bibliographystyle{alpha}
\bibliography{bibliography}

\end{document}